\def\R{\mathbb{R}}
\providecommand{\norm}[1]{\lVert#1\rVert}
\newcommand{\bigpar}{\par\quad\newline\noindent}
\newcommand{\Dconst}{k_0}
\newcommand{\ini}{\mathrm{ini}}
\newcommand{\eps}{\varepsilon}
\newtheorem{theorem}{Theorem}[]
\newtheorem{lemma}[theorem]{Lemma}
\newtheorem{proposition}[theorem]{Proposition}
\begin{document}

\title{%
On a Model for Mass Aggregation with Maximal Size
}%
\date{\today}%
\author{Ondrej Bud\'a\v{c}\thanks{ %
        Vrije Universiteit Amsterdam,
        ondrob@gmail.com}
    \;
    Michael Herrmann\thanks{ %
        Oxford Centre for Nonlinear PDE (OxPDE),
        michael.herrmann@maths.ox.ac.uk}
    \;
    Barbara Niethammer\thanks{ %
        Oxford Centre for Nonlinear PDE (OxPDE),
        niethammer@maths.ox.ac.uk}
    \;
        Andrej Spielmann\thanks{ %
        \'{E}cole polytechnique f\'{e}d\'{e}rale de Lausanne,
        andrej.spielmann@epfl.ch}
}
\maketitle
\begin{abstract}
We study a kinetic mean-field equation for a  system of particles with different sizes, in
which particles are allowed to coagulate only if their sizes sum up to a prescribed
time-dependent value. We prove well-posedness of this model, study the existence of
self-similar solutions, and analyze the large-time behavior mostly by numerical simulations.
Depending on the parameter $\Dconst$, which controls the probability of coagulation,
we observe two different scenarios: For $\Dconst>2$ there exist two self-similar solutions to
the mean field equation, of which one is unstable. In numerical simulations we observe that
for all initial data the rescaled solutions converge to the stable self-similar solution.
For $\Dconst<2$, however, no self-similar behavior occurs as the solutions converge in the
original variables to a limit that depends strongly on the initial data. We prove rigorously a
corresponding statement for $\Dconst\in (0,1/3)$. Simulations for the cross-over case
$\Dconst=2$ are not completely conclusive, but indicate that, depending on the initial data,
part of the mass evolves in a self-similar fashion whereas another part of the mass remains in
the small particles.
\end{abstract}
%
%
%
\quad\newline\noindent%
\begin{minipage}[t]{0.15\textwidth}%
{\small Keywords:} %
\end{minipage}%
\begin{minipage}[t]{0.8\textwidth}%
\small %
\emph{aggregation with maximal size}, %
\emph{self-similar solutions}, \\%
\emph{coarsening in coagulation models} %
\end{minipage}%
\medskip
\newline\noindent
\begin{minipage}[t]{0.15\textwidth}%
\small MSC (2000): %
\end{minipage}%
\begin{minipage}[t]{0.8\textwidth}%
\small %
45K05, 	
82C22  
\end{minipage}%
%
%
%

\section{Introduction}
%
%
Mass aggregation is a fundamental process that appears in a large range of applications, such
as formation of aerosols, polymerization, clustering of stars or ballistic aggregation, see
for instance \cite{Drake,Fried,Ziff}. In all these applications, certain types of 'particles'
form  clusters that are characterized by their 'size' or 'mass' $x$. 
 Smoluchowski's \cite{Smolu} classical mesoscopic mean-field description of
irreversible aggregation processes describes the evolution of the number density  $g(t,x)$ of
clusters of size $x$ per unit volume at time $t$. Clusters of size $x$ and $y$ can coalesce
by binary collisions to form clusters of size $x+y$ at a rate given by a kernel
$K(x,y)$, such that the dynamics of $g$ is given by
\begin{equation}
\label{coag1}
\frac{\partial}{\partial t} g(t,x) = \frac 1 2  \int_0^{x}
K(y,x-y) g(t,x-y) g(t,y) \,dy \, -\, g(t,x) \int_0^{\infty}
K(x,y) g(t,y)\,dy\,.
\end{equation}
An issue of fundamental interest in the mathematical analysis of coagulation processes is the
phenomenon of dynamic scaling for homogeneous kernels. This means that for initial data from
a certain class the solution to \eqref{coag1} converges to a certain self-similar solution.
Unfortunately, except for some special kernels such as the constant one, this question is
still only poorly understood (see e.g. \cite{Leyvraz} for an overview).  While it has been
common belief in the applied literature that self-similar solutions are unique, it has
recently been shown for some special kernels \cite{MP1} that there is a whole one-parameter
family of self-similar solutions. These solutions can be distinguished by their tail
behavior, and their respective domains of attraction are characterized by the tail behavior
of the initial data. 
\par
In contrast to this, very little is known for other homogeneous kernels. The existence of
fast-decaying self-similar solutions for a range of kernels is established in \cite{EMR,FL1},
but both the existence of solutions with algebraic tail and the uniqueness of solutions 
are still open problems. In general, unless explicit methods such as the Laplace transform work,
just proving existence of self-similar solutions to a coagulation equation can be a formidable
task.
\par
Thus, despite their fundamental role, many properties of mean-field models for coagulation
processes,  in particular with respect to dynamic scaling, are not well understood. Motivated
by an application of elasto-capillary coalescence of wetted polyester lamellas \cite{BBR}, we
investigate this question for a special singular coagulation kernel $K$. This kernel allows
only those clusters to coagulate that can form clusters of a given maximal size. To our
knowledge  this is the first mathematical analysis for such type of kernels. Even though
tails of the size distribution do not play a role here, we find that self-similar solutions
are still not unique and the analysis of the long-time behavior turns out to be delicate.
\bigpar
In model considered here, two particles can coagulate only if the sum of their sizes is
equal
to $M(t)$, where $M$ is a given increasing function of time. This means that at time $t$ only
particles of size $M(t)$ are emerging and the amount of particles of all smaller sizes is
decreasing. This corresponds to $K(x,y,t)\sim\delta_0(x+y-M(t))$, where $\delta_0$ denotes the
delta-distribution in $0$ and the factor of proportionality may depend on time.
\par
As above, we denote the number density  of particles of size $x$ at time $t$ by  $g(t,x)$. The
total number of particles per unit volume at time $t$ is then $N(t)=\int_0^{M(t)}g(t,x)dx$.
At time $t$, the density of particles of any size $x<M(t)$ is decreasing at a rate
proportional to the probability density of a particle of size $x$ meeting a particle of size
$M(t)-x$. The coagulation process can hence be described by
\begin{align*}
\frac{\partial g}{\partial t}(t,x)=-K(t)\frac {g(t,x)}{N(t)}\frac {g(t,M(t)-x)}{N(t)}
\end{align*}
where $K(t)$ is a rate function proportional to the expected number of coagulation events per
unit time. Motivated by \cite{BBR} we make the ansatz $K(t)=\Dconst M'(t)N(t)$, where
$\Dconst$ is a constant of proportionality that depends on the particular physical process.
The above equation hence reads
\begin{equation}
\label{eq:gen0}%
\frac{\partial g}{\partial t}(t,x)=-\frac{\Dconst M'(t)}{N(t)}g(t,x)g(t,M(t)-x).
\end{equation}
The coagulation process described above neither creates nor destroys mass, which is expressed by
the mass conservation equation
\begin{equation}
\label{eq:gen0a}
\int_0^{M(t)}xg(t,x)dx=\sigma
\end{equation}
where $\sigma$ is a constant. From (\ref{eq:gen0}) and (\ref{eq:gen0a}) we will 
derive an equivalent condition for $g(t,M(t))$, see 
Equation (\ref{eq:gen3}) below.
\par
For $x>M(t)$, $g(t,x)$ is not changing since particles of size greater than $M(t)$ can not
form a particle of size $M(t)$ by coagulation. Hence $\frac{\partial g}{\partial t}(t,x)=0$
for $x>M(t)$. We are normally interested in processes where the value of $M$ at the starting
time is greater than the size of all particles existing initially, and hence we also assume
$g(t,x)=0$ for $x>M(t)$.
\par
An important feature of Equation (\ref{eq:gen0}) is its invariance under reparametrization
of time. As a consequence $M(t)$ is not determined by the initial data but can be
prescribed to be an arbitrary increasing function of time (see also \cite{GM,MNP}, where
such an invariance has been crucial in the analysis of a model for min-driven clustering).
In the following we choose
$M(t)=t$ and also normalize 
 the mass by setting $\sigma=1$. 
 Consequently, in what follows we study the system of
equations
\begin{equation}
\label{Eqn:Model}
\frac{\partial{g}}{\partial t}(t,x)=-\frac \Dconst { N(t)}g(t,x)g(t,t-x),\quad
N(t)=\int_0^t g(t,x)dx
,\quad
\int_0^tx g(t,x)dx=1
\end{equation}
with $t\geq1$ and $0\leq{x}\leq{t}$.
\par
The first aim of this article is to establish well-posedness of the initial value problem and
to study the long time behavior of solutions to \eqref{Eqn:Model}. To this end, we prove
global existence and uniqueness of mild solutions in Section
\ref{Sec:GEU}, Theorem \ref{Theo.Existence.Mild.Solutions} by rewriting \eqref{Eqn:Model} as a
fixed point equation. In contrast to coagulation equations with more
regular kernels, for which well-posedness can often be proved in the space of probability
measures \cite{FL3}, here we need to work with  functions that are continuous
up to the points $x=1$ and $x=t$. The reason that the
fixed-point argument is not quite straightforward is that at any time there is an influx of
particles of the largest size $M(t)$ that leads to a nontrivial boundary condition.
\par
In Section \ref{Sec:SSS} we study the existence of self-similar solutions.
Combining rigorous arguments with numerical simulations, we identify $\Dconst =2$ as a
critical value: For $\Dconst <2$ no self-similar solutions exist but for $\Dconst >2$ we find
two self-similar solutions which have different shape and stability properties. Section
\ref{Sec:LTB} is devoted to numerical simulations of initial value problems. Our results for
$\Dconst >2$ suggest that, after a suitable rescaling, each solution converges to the second
self-similar solution. For $\Dconst <2$, however, $g$ converges to a steady state
$g_{\infty}(x)$, $x\in\R^+$, whose shape  depends strongly on the initial data. We finally
prove this assertion in Proposition \ref{T.main} for $\Dconst<\tfrac{1}{3}$.
%
%
%
\section{Global existence and uniqueness of solutions}\label{Sec:GEU}
%
In this section we derive a notion of mild solutions to \eqref{Eqn:Model} that
relies on an appropritate reformulation of the mass constraint
$\int_0^t{x}g(t,x)dx=1$. We then use Banachs's Contraction Mapping Theorem to
prove the local existence and uniqueness of mild solutions, and finally employ some a priori
estimates to show that these mild solutions exist globally in time.

%
\subsection{Notion of mild solutions and main result}
%
%
%
%
To reformulate the mass constraint \eqref{Eqn:Model}$_3$ we suppose that a
piecewise smooth solution $g(t,x)$ to \eqref{Eqn:Model}$_1$ and \eqref{Eqn:Model}$_2$  is
given. We then have
\begin{eqnarray}
\label{Ass.Initial.Data}
 0&=&\frac d{dt}\int_0^txg(t,x)dx=\int_0^tx\frac{\partial g}{\partial t}(t,x)dx+tg(t,t),
\end{eqnarray}
and due to \eqref{Eqn:Model}$_1$ we obtain
\begin{equation}
 tg(t,t)=\frac {\Dconst}{N(t)}\int_0^txg(t,x)g(t,t-x)dx.\label{eq:btw1}
\end{equation}
Moreover, substituting $x\rightsquigarrow{x-t}$ we find
\begin{eqnarray*}
\int_0^txg(t,x)x(t,t-x)dx&=&t\int_0^tg(t,t-x)g(t,x)dx
-\int_0^txg(t,t-x)g(t,x)dx,
\end{eqnarray*}
so (\ref{eq:btw1}) implies
\begin{equation}
 g(t,t)=\frac {\Dconst}{2N(t)}\int_0^tg(t,x)g(t,t-x)dx.\label{eq:gen3}
\end{equation}
This condition is equivalent to \eqref{Eqn:Model}$_3$ provided that $g(t,x)$ solves
\eqref{Eqn:Model}$_1$, and for the ease of notation we introduce the operator
$B:g\mapsto{B}[g]$ by
\begin{eqnarray*}
B[g](t)&:=&\frac{\Dconst}{2N(t)}\int_0^tg(t,x)g(t,t-x)dx,
\end{eqnarray*}
where $N$ depends on $g$ via \eqref{Eqn:Model}$_2$.
\par
In what follows we fix some $T>0$ and introduce $X$ as the space of all functions $g$ that
satisfy
\begin{enumerate}
 \item[i)] $g:\Omega=[1,T]\times[0,T]\rightarrow\R_0^+$,        
 \item[ii)] $g(1,x)=g_\ini(x)$ for all $x\in[0,1]$,
 \item[iii)] $g(t,x)=0$ if $x>t$,
 \item[iv)] $g$ is continuous in all points $(t,x)$ with $x\neq 1$ and $x\neq{t}$,
\end{enumerate}
where the initial data $g_\ini$ are supposed to satisfy
\begin{align*}
g_\ini\in{C}([0,1];\,\R^+),\qquad\int_0^1x{g_\ini(x)}dx=1.
\end{align*}
To introduce the  notion of mild solutions we now define the operator
$\Gamma:X\rightarrow X$ by
\begin{equation}
\label{Eqn:MildSolution}
 \Gamma[g](t,x) = \left\{\begin{aligned}
  &g_\ini(x)\exp\left({-{\Dconst}\int_1^t\frac{g(s,s-x)}{N(s)}ds}\right)\quad && \text{
for } x<1,\\
  &B[g](x)\exp\left({-{\Dconst}\int_x^t\frac{g(s,s-x)}{
N(s)}ds}\right)&& \text{ for }1\leq
 x\leq t,\\
  &0&& \text{ for } t<x,
\end{aligned}
\right.
\end{equation}
with  $N(t)=\int_0^t{}g(t,x)dx$. Notice that $\Gamma$ maps $X$ into itself, and that for each
$\tilde{g}\in{X}$ the function ${g}=\Gamma[\tilde{g}]$ satisfies in almost all points $(t,x)$
the linear differential equation
\begin{equation}
\frac{\partial {g}}{\partial t}(t,x)=\left\{\begin{aligned}
 &-\frac {\Dconst}{\tilde N(t)}{g}(t,x)\tilde{g}(t,t-x)\quad && x<t,\\
 &0&& x>t
\end{aligned}
\right.\label{eq:aux}
\end{equation}
with $\tilde{N}(t)=\int_0^t{}\tilde{g}(t,x)dx$ and initial data $g(1,x)=g_\ini(x)$ . Our
main result can be summarized as follows.
\begin{theorem}
\label{Theo.Existence.Mild.Solutions}
For given initial data $g_\ini$ with \eqref{Ass.Initial.Data} and any $T>0$ there exists a
unique mild solution $u\in{X}$ to \eqref{Eqn:Model} that satisfies \eqref{Eqn:MildSolution} with
$\Gamma[g]=g$.
\end{theorem}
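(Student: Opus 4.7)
The plan is to combine a local Banach fixed-point argument for $\Gamma$ with a priori estimates coming from the mass constraint, and then glue the pieces together by a continuation argument.

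\textbf{Local existence.} Fix $\tau>0$ small and work on the slab $[1,1+\tau]$. I would define the closed subset
\[
X_R^\tau := \bigl\{g\in X|_{[1,1+\tau]}\ :\ 0\le g\le R,\ N(t)\ge c\text{ on }[1,1+\tau]\bigr\}
\]
with $R$ chosen somewhat larger than $\max\bigl(\|g_\ini\|_\infty,\tfrac{\Dconst}{2}\|g_\ini\|_\infty\bigr)$ and $c=\tfrac{1}{2}\int_0^1 g_\ini\,dx$. Stability $\Gamma(X_R^\tau)\subset X_R^\tau$ follows because (i) the exponential factor in \eqref{Eqn:MildSolution} is bounded by one, so the branch $x<1$ is controlled by $g_\ini$; (ii) on $1\le x\le t$ the bound
\[
B[g](t)\le \frac{\Dconst}{2N(t)}\|g(t,\cdot)\|_\infty\!\int_0^t g(t,x)\,dx=\frac{\Dconst}{2}\|g(t,\cdot)\|_\infty\le\frac{\Dconst R}{2}
\]
keeps the boundary datum in hand; (iii) $N(t)\ge c$ follows by continuity for $\tau$ small. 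The contraction estimate $\|\Gamma[g]-\Gamma[\tilde g]\|_\infty\le L\tau\|g-\tilde g\|_\infty$ would be obtained from the bilinearity of $B$ in $g$, the Lipschitz bound $|e^{-a}-e^{-b}|\le|a-b|$, and the uniform lower bound $N,\tilde N\ge c$; the constant $L$ depends on $R,c,\Dconst$ but not on $\tau$, hence for sufficiently small $\tau$ Banach's theorem delivers a unique fixed point in $X_R^\tau$.

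\textbf{A priori estimates.} Differentiating the mass integral along a mild solution reproduces exactly the computation \eqref{eq:btw1},
\[
\frac{d}{dt}\!\int_0^t x\,g(t,x)\,dx=tg(t,t)-\frac{\Dconst}{N(t)}\!\int_0^t xg(t,x)g(t,t-x)\,dx=0,
\]
so the mass identity $\int_0^t xg(t,x)\,dx\equiv1$ is automatic. Because $x\le t$, this immediately gives the a priori lower bound $N(t)\ge 1/t$, which keeps the factor $1/N(t)$ finite on every finite time interval. Differentiating $N$ itself yields $dN/dt=-B[g](t)\le 0$, hence $N$ is non-increasing and $\int_1^\infty B[g](s)\,ds\le N(1)$. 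For the sup-norm bound I would exploit that $\partial_t g\le 0$, so $g(t,x)$ is non-increasing in $t$ along each characteristic $x=\text{const}$, giving
\[
\|g(t,\cdot)\|_\infty\le \max\bigl(\|g_\ini\|_\infty,\ \sup_{s\in[1,t]} B[g](s)\bigr),
\]
which combined with $B[g](s)\le\tfrac{\Dconst}{2}\|g(s,\cdot)\|_\infty$ yields a bound on $\|g\|_\infty$ on any finite interval $[1,T]$.

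\textbf{Global extension and main obstacle.} A standard continuation argument finishes the proof: were the maximal local solution defined only up to $T^*<T$, the a priori bounds on $\|g\|_\infty$ and on $N$ from below would allow one to restart the fixed-point argument at $T^*$, contradicting maximality; uniqueness transports along the patches from the contraction step. The principal technical delicacy is closing the sup-norm a priori bound in the regime $\Dconst\ge 2$, where the self-referential inequality $\beta(t)\le\tfrac{\Dconst}{2}\beta(t)$ with $\beta(t)=\sup_{s\le t}B[g](s)$ carries no information by itself; I expect this to be handled either by introducing a time-weighted sup norm $\sup_t e^{-\alpha t}\|g(t,\cdot)\|_\infty$ that accommodates exponential growth on each finite interval, or by a sharper convolution estimate for $\int_0^t g(t,x)g(t,t-x)\,dx$ that uses the $L^1$-integrability of $B[g]$ to rule out finite-time blow-up.
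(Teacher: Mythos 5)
Your overall architecture (local fixed point, a priori bounds from the mass constraint, continuation) matches the paper's, and several of your observations are correct (the identity $\tfrac{dN}{dt}=-B[g](t)$, the lower bound $N(t)\ge 1/t$ from $1=\int_0^t xg\,dx\le tN(t)$). But there are two genuine gaps. First, the claimed contraction $\norm{\Gamma[g]-\Gamma[\tilde g]}_\infty\le L\tau\norm{g-\tilde g}_\infty$ is false in general. The culprit is the boundary term: for $x\in[1,1+\tau]$ one has
\[
|B[g_1](x)-B[g_2](x)|\;\approx\;\frac{\Dconst}{2N(x)}\cdot 2\int_0^x|g_1-g_2|(x,y)\,g(x,x-y)\,dy\;\lesssim\;\Dconst\,\norm{g_1-g_2}_\infty,
\]
with \emph{no} factor of $\tau$, because the convolution integrates over all of $[0,x]$, not merely over the thin new sliver of the domain. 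So the sup norm alone cannot be contracted for $\Dconst$ of order one. The paper resolves exactly this by estimating the $B$-difference against the $L^1$-in-$x$ norm $\norm{\cdot}_2=\sup_t\int_0^T|\cdot|\,dx$ (bounding one factor pointwise and integrating the difference), showing that the $\norm{\cdot}_2$-norm of $\Gamma[g_1]-\Gamma[g_2]$ \emph{does} gain a factor $(T-1)$, and then contracting the weighted combination $\norm{\cdot}_1+\beta\norm{\cdot}_2$ with $\beta\ge 2L$. Some such two-norm (or equivalent) device is unavoidable here; your proposal is missing it.

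Second, you correctly identify that the self-referential bound $B[g]\le\tfrac{\Dconst}{2}\norm{g(t,\cdot)}_\infty$ is vacuous for $\Dconst\ge2$, but the first remedy you suggest, a time-weighted norm $\sup_t e^{-\alpha t}\norm{g(t,\cdot)}_\infty$, cannot work: the estimate $\int_0^t g(t,y)g(t,t-y)\,dy\le\norm{g(t,\cdot)}_\infty N(t)$ involves only quantities at the \emph{same} time $t$, so the weight $e^{-\alpha t}$ cancels identically and one is left with the same condition $\Dconst\le2$. What is needed is a weight in the \emph{size} variable, which is your second (vague) suggestion and is precisely the paper's Lemma \ref{lemma1}: the superexponential profile $f(x)=h_0e^{Dx^3}$ satisfies $\int_0^xf(y)f(x-y)\,dy\le f(x)/(T\Dconst)$ because $y^3+(x-y)^3-x^3=-3xy(x-y)$ makes the product $f(y)f(x-y)$ concentrate near the endpoints; imposing $0\le g\le f$ then gives an invariant set, and simultaneously the a priori bound that powers the continuation argument for every $\Dconst>0$ and every $T$. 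Without this (or an equivalent convolution estimate) both your invariance step for $\Dconst>2$ and your global extension remain open, which is where the actual difficulty of the theorem lies.
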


%
%
\subsection{Existence proof for fixed points of $\Gamma$ }
%
To employ the Contraction Mapping Principle we identify a subset $S\subset X$ that is invariant
under $\Gamma$ and a metric such that $X$ is complete, $S$ is closed, and $\Gamma$ is a
contraction. In what follows $S$ is given by
$$
S=\left\{\tilde g\in X\left|\ \forall (t,x)\in\Omega:\quad 0\leq\tilde g(t,x)\leq f(x),\
\frac12\leq\int_0^ty\tilde g(t,y)dy\leq\frac32\right.\right\},
$$
where $f:[0,T]\rightarrow\R^+$ will be identified below. This set $S$ is invariant under
$\Gamma$
provided that each $\tilde g\in{S}$ satisfies
\begin{align}
0\leq\Gamma[\tilde g](t,x)\leq f(x)  	&\qquad\forall (t,x)\in\Omega,  \label{invarcond1}\\
\frac12\leq\int_0^tx\Gamma[\tilde g](t,x)dx\leq\frac32 &\qquad\forall t\in[1,T].
\label{invarcond2}
\end{align}
Towards \eqref{invarcond1} we estimate
$$\tilde N(t)=\int_0^t\tilde g(t,x)dx\geq \frac 1T\int_0^tx\tilde g(t,x)dx \geq \frac 1{2T},$$
and this implies $1/\tilde N(t)\leq2T$. Moreover,
\begin{enumerate}
\item[i)] if $x\leq 1$, then
  $$
   \Gamma[\tilde g](t,x)=g_\ini(x)\exp{\left(-{\Dconst}\int_1^t\frac{\tilde g(s,s-x)}{\tilde
N(s)}ds\right)}
                                       \leq g_\ini(x)\leq h_0:=
\norm{g_\ini}_\infty,                                       
  $$
\item[ii)] if $t<x$, then $\Gamma[\tilde g](t,x)=0\leq f(x)$,
\item[iii)] if $1\leq x\leq t$, then
  $$
   \Gamma[\tilde g](t,x)=B[\tilde g](x)\exp{\left(-{\Dconst}\int_x^t\frac{\tilde
g(s,s-x)}{\tilde N(s)}ds\right)}
                                       \leq B[\tilde g](x)\leq T{\Dconst}\int_0^xf(y)f(x-y)dy.
  $$
\end{enumerate}
Therefore, $f$ satisfies (\ref{invarcond1}) provided that
\begin{equation}
\int_0^xf(y)f(x-y)dy\leq\frac{f(x)}{T\Dconst}\label{ineq:cond1}\;\;\;\text{for all}\;\;\;x\geq1,
\qquad\text{and}\qquad
f(x)\geq h_0\;\;\;\text{for all}\;\;\;x\leq1.
\end{equation}
\begin{lemma}
\label{lemma1}%
There exists a constant $D>0$ such that $f(x)=h_0e^{Dx^3}$ satisfies
(\ref{ineq:cond1}).
\end{lemma}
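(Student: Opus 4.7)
The plan is to verify the two requirements in \eqref{ineq:cond1} directly, treating $D$ as a parameter to be fixed at the end. The condition $f(x) \geq h_0$ on $[0,1]$ is immediate since $f(x) = h_0 e^{Dx^3} \geq h_0$ whenever $D \geq 0$. So the whole work lies in the convolution inequality for $x \geq 1$.

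First I would exploit the algebraic identity $y^3 + (x-y)^3 = x^3 - 3xy(x-y)$, which lets me pull the factor $e^{Dx^3}$ out of the integral:
\begin{equation*}
\int_0^x f(y) f(x-y)\,dy = h_0^2 e^{Dx^3} \int_0^x e^{-3D xy(x-y)}\,dy .
\end{equation*}
After dividing by $f(x)/(T\Dconst) = h_0 e^{Dx^3}/(T\Dconst)$, the inequality \eqref{ineq:cond1} reduces to
\begin{equation*}
T \Dconst\, h_0 \int_0^x e^{-3D xy(x-y)}\,dy \leq 1 \qquad \text{for all } x \geq 1.
\end{equation*}

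Next I would estimate the remaining integral. By symmetry of $y(x-y)$ about $y = x/2$,
\begin{equation*}
\int_0^x e^{-3D xy(x-y)}\,dy = 2 \int_0^{x/2} e^{-3D xy(x-y)}\,dy .
\end{equation*}
On the interval $[0, x/2]$ one has $x - y \geq x/2$, hence $y(x-y) \geq xy/2$, and therefore
\begin{equation*}
\int_0^{x/2} e^{-3D xy(x-y)}\,dy \leq \int_0^{x/2} e^{-\tfrac{3}{2} D x^2 y}\,dy \leq \frac{2}{3 D x^2} .
\end{equation*}
Combining gives $\int_0^x e^{-3D xy(x-y)}\,dy \leq \tfrac{4}{3 D x^2} \leq \tfrac{4}{3 D}$ for $x \geq 1$.

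Finally I would close the argument by choosing $D$ large enough, namely any $D \geq \tfrac{4 T \Dconst h_0}{3}$, so that $T \Dconst h_0 \cdot \tfrac{4}{3 D} \leq 1$, which yields \eqref{ineq:cond1}. The only mildly delicate point is that $y(x-y)$ vanishes at the endpoints of $[0,x]$, so a naive pointwise bound fails; the symmetry-based split at $y = x/2$ together with the linear lower bound $y(x-y) \geq xy/2$ is what turns this into a controllable Gaussian-type integral and produces the crucial $x^{-2}$ decay that allows a choice of $D$ uniform in $x \geq 1$.
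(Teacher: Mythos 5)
Your proof is correct, and while it starts from the same reduction as the paper --- the identity $y^3+(x-y)^3-x^3=-3xy(x-y)$, which turns \eqref{ineq:cond1} into a bound on $\int_0^x e^{-3Dxy(x-y)}\,dy$ --- the way you estimate that integral is genuinely different. The paper splits $[0,x]$ into three pieces $[0,A/x]\cup[A/x,x-A/x]\cup[x-A/x,x]$, bounds the integrand by $1$ on the outer pieces and by its (equal) boundary values on the middle piece using convexity of $y\mapsto e^{-3Dxy(x-y)}$, arriving at $\tfrac{2A}{x}+xe^{-cDAx}$; it then has to tune two parameters, first $A<(4T\Dconst)^{-1}$ and then $D$ large, and to check that $xe^{-3DAx}$ is decreasing for $x\ge 1$. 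You instead use the symmetry of $y(x-y)$ about $y=x/2$ and the linear lower bound $y(x-y)\ge xy/2$ on $[0,x/2]$ to majorize by an explicit exponential integral, obtaining the clean bound $\tfrac{4}{3Dx^2}$ and hence an explicit admissible choice $D\ge \tfrac{4}{3}T\Dconst h_0$. Your route is shorter, avoids the auxiliary parameter $A$ and the convexity argument, and yields quantitative decay in $x$ rather than just uniform smallness; it also keeps track of the factor $h_0$ that the paper silently drops when it rewrites \eqref{ineq:cond1} as $\tfrac{1}{T\Dconst}\ge\int_0^x e^{D(y^3+(x-y)^3-x^3)}\,dy$ (a harmless omission there, but your version is the more careful one). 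Both arguments are valid; yours is arguably the cleaner of the two.
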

\begin{proof}
There is nothing to show for $x\leq1$. For $x\geq1$, condition (\ref{ineq:cond1}) can be
rewritten as
$$ \begin{aligned}
 \frac1{T\Dconst}&\geq\int_0^xe^{D(y^3+(x-y)^3-x^3)}dy
                   &=\int_0^xe^{-3Dxy(x-y)}dy.
\end{aligned}$$
Now suppose that $A\in[0,1/2]$ is given. Then for all $x\geq 1$ we have $x\geq x-\frac
Ax\geq\frac Ax\geq0$, and this implies
$$ \begin{aligned}
 \int_0^xe^{-3Dxy(x+y)}dy&=\left[\int_0^{\frac Ax}+\int_{\frac Ax}^{x-\frac Ax}+\int_{x-\frac
Ax}^x\right]e^{-3Dxy(x+y)}dy\\
                         &\leq\frac{2A}x+xe^{-3Dx\frac Ax(x+\frac Ax)}
                         \leq\frac{2A}x+xe^{-3DAx}.
\end{aligned}
$$
For the first inequality we have estimated the integrand in the first and third part of the
integral by $1$, and in the second part by its value on the boundaries, which can be done
since the integrand is a convex function. The second inequality then follows as  $A$, $D$,
and $x$ are non-negative.
\par
We now choose $A$ with $A<(4T\Dconst)^{-1}$, and this implies  $\frac{2A}x<\frac1{2T\Dconst}$
for all $x\geq 1$. Next we choose D large enough so that
$e^{-3DA}<\frac1{2T\Dconst}$ and $3DA>1$.
Since the function $xe^{-3DAx}$ is decreasing for $x\geq 1$ we find
$xe^{-3DAx}<\frac1{2T\Dconst}$
for all $x\geq 1$. Hence our choice of $A$ and $D$ guarantees (\ref{ineq:cond1}).
\end{proof}
We now define
$$M_T:=\max\{f(t):t\in[0,T]\}.$$
Notice that $\tilde g\in S$ implies $\tilde{g}(t,x)\leq M_T$ for all $(t,x)\in\Omega$.
\begin{lemma}
 We can choose $T>0$ such that (\ref{invarcond2}) is satisfied for all $\tilde{c}\in{S}$.
\end{lemma}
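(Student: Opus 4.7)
The plan is to show that the mass functional $m(t):=\int_0^t x\,\Gamma[\tilde g](t,x)\,dx$ is Lipschitz in $t$ on $[1,T]$ with a constant that can be controlled uniformly in $\tilde g\in S$ and uniformly in $T$ as long as $T$ is confined to a compact interval $[1,T_0]$ of our choosing. Since $m(1)=\int_0^1 x\,g_\ini(x)\,dx=1$ by the normalization in \eqref{Ass.Initial.Data}, this gives $|m(t)-1|\le L(T-1)$ for all $t\in[1,T]$, and \eqref{invarcond2} will then follow by picking $T$ with $T-1\le 1/(2L)$.

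First I would differentiate $m(t)$. From \eqref{Eqn:MildSolution} the function $\Gamma[\tilde g](t,x)$ is explicit on each of the pieces $x<1$ and $1\le x<t$, being $g_\ini(x)$ or $B[\tilde g](x)$ multiplied by an exponential that is smooth in $t$; the identity \eqref{eq:aux} gives $\partial_t\Gamma[\tilde g](t,x)=-\Dconst\,\Gamma[\tilde g](t,x)\,\tilde g(t,t-x)/\tilde N(t)$ on both pieces. Applying the Leibniz rule and using $\Gamma[\tilde g](t,t)=B[\tilde g](t)$, which drops out of \eqref{Eqn:MildSolution} when $x=t$, I obtain
$$
m'(t)\;=\;t\,B[\tilde g](t)\;-\;\frac{\Dconst}{\tilde N(t)}\int_0^t x\,\Gamma[\tilde g](t,x)\,\tilde g(t,t-x)\,dx.
$$
As a consistency check, the derivation leading to \eqref{eq:gen3} shows that when $\tilde g=\Gamma[\tilde g]$ these two terms cancel exactly by symmetry $x\rightsquigarrow t-x$, so the estimate below in effect measures only the failure of self-consistency of $\Gamma[\tilde g]$ for a general $\tilde g\in S$.

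The remaining step is an a priori bound on $|m'|$. By Lemma \ref{lemma1} and \eqref{invarcond1}, both $\tilde g$ and $\Gamma[\tilde g]$ are pointwise dominated by $M_T$, and the lower mass bound built into $S$ together with $\tilde N(t)\ge 1/(2T)$ (already derived above) gives $B[\tilde g](t)\le \Dconst\,t\,T\,M_T^2$ and a matching bound on the integral term. This yields $|m'(t)|\le C(T,h_0,\Dconst)$ with a constant depending continuously on $T$; since $M_T=h_0 e^{DT^3}$ is bounded on any fixed compact range $[1,T_0]$, $C$ is replaced there by an absolute constant $L=L(h_0,\Dconst,T_0)$, and choosing $T\in[1,T_0]$ with $T-1\le 1/(2L)$ delivers \eqref{invarcond2}. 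The step I expect to require the most care is the differentiation of $m(t)$, since $\Gamma[\tilde g]$ is only piecewise continuous with possible jumps along the characteristics $x=1$ and $x=t$ and $\tilde g$ itself is only continuous off those lines; however, the explicit exponential representation in \eqref{Eqn:MildSolution} together with the pointwise bound $\tilde g\le M_T$ keeps all integrands dominated, so Leibniz differentiation under the integral sign and the boundary contribution at $x=t$ are justified by dominated convergence.
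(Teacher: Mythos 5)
Your proposal is correct and follows essentially the same route as the paper: differentiate $m(t)=\int_0^t x\,\Gamma[\tilde g](t,x)\,dx$ via the Leibniz rule using \eqref{eq:aux} and the boundary value $\Gamma[\tilde g](t,t)=B[\tilde g](t)$, bound $|m'|$ uniformly on $S$ by a constant depending on $T$, $M_T$, $h_0$, $\Dconst$, and then shrink $T-1$ so that $m$ stays in $[1/2,3/2]$. The only cosmetic difference is that the paper combines the boundary term and the integral term into a single integral of $x\,\tilde g(t,t-x)\,(\tilde g-\Gamma[\tilde g])(t,x)$ before estimating, whereas you bound the two terms separately; both yield the same order of bound.
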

\begin{proof}
For $\tilde g\in S$ we have
\begin{align*}
\frac{\partial}{\partial{t}}\Gamma[\tilde g](t,x)
=%
-\frac{\Dconst}{\tilde N(t)}\Gamma[\tilde g](t,x)\tilde g(t,t-x)
\end{align*}
because $\Gamma[\tilde g]$ is piecewise continuously differentiable in $t$ and satisfies
(\ref{eq:aux}). The continuity properties of $\Gamma[\tilde g]$ imply that 
$\int_0^tx\Gamma[\tilde g](t,x)dx$ is differentiable in time, and we estimate
\begin{eqnarray*}
\left|\frac d{dt}\int_0^tx\Gamma[\tilde g](t,x)dx\right|&=&\left|t\Gamma[\tilde
g](t,t)-\int_0^t\frac{\partial}{\partial t}\Gamma[\tilde g](t,x)dx\right|\\
&=&\left|\frac {\Dconst}{\tilde N(t)}\int_0^tx\tilde g(t,x)\tilde g(t,t-x)dx - \frac
{\Dconst}{\tilde N(t)}\int_0^tx\Gamma[\tilde g](t,x)\tilde g(t,t-x)dx\right|\\
                 &\leq&\frac {\Dconst}{\tilde N(t)}\int_0^tx\tilde g(t,t-x)\left|\tilde
g-\Gamma[\tilde g]\right|(t,x)dx\\
                 &\leq&4{\Dconst}T^3M_T^2.
\end{eqnarray*}
For $t=1$ we have $\int_0^1x\Gamma[\tilde g](1,x)dx=\int_0^1x{g_\ini}(x)dx=1$, so using the
above
bound we can choose $T$ such that $\int_0^t\Gamma[\tilde g](t,x)dx$ stays between $1/2$ and
$3/2$ for all $t\in[1,T]$.
\end{proof}
We have now shown that $S$ is invariant under $\Gamma$, and that there
exists a constant $M_T$ such that $\tilde
c(t,x)\leq M_T$ holds for all $\tilde g\in S$ and all $(t,x)\in\Omega$.
\par
In the next step we construct a norm for $X$ such that $\Gamma$ is a contraction on $S$. To
this end, we define
\begin{align*}
\norm{\tilde g}_1:=\sup_{(t,x)\in\Omega} |\tilde g(t,x)|,\qquad
\norm{\tilde g}_2:=\sup_{t\in[1,T]}\int_0^T|\tilde g(t,x)|dx,
\end{align*}
and derive an estimate for $\norm{\Gamma[\tilde g_1]-\Gamma[\tilde g_2]}_{1,2}$ in terms
of $\norm{\tilde g_1-\tilde g_2}_{1,2}$. Afterwards we show that $\Gamma$
is a contraction with respect to some linear combination of these norms.
\begin{lemma}
\label{Lem1}
There exists a constant $L>0$ such that for any $\tilde g_1,\tilde g_2\in S$
\begin{eqnarray*}
\norm{\Gamma[\tilde g_1]-\Gamma[\tilde g_2]}_1&\leq& L(T-1)\norm{\tilde g_1-\tilde
c_2}_1+L\norm{\tilde g_1-\tilde g_2}_2,\\
\norm{\Gamma[\tilde g_1]-\Gamma[\tilde g_2]}_2&\leq& L(T-1)\norm{\tilde g_1-\tilde
c_2}_1+L(T-1)\norm{\tilde g_1-\tilde g_2}_2.
\end{eqnarray*}
\end{lemma}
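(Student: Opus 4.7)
The plan is to decompose $\Gamma[\tilde g_1]-\Gamma[\tilde g_2]$ into a \emph{prefactor} difference and an \emph{exponent} difference. Writing $\Gamma[\tilde g]=P[\tilde g]\exp(-E[\tilde g])$, where $P[\tilde g](t,x)$ equals $g_\ini(x)$ on $\{x<1\}$ and $B[\tilde g](x)$ on $\{1\le x\le t\}$, and $E[\tilde g](t,x)=\Dconst\int_{\max\{1,x\}}^t\tilde g(s,s-x)/\tilde N(s)\,ds$, the algebraic identity
\begin{equation*}
\Gamma[\tilde g_1]-\Gamma[\tilde g_2]=\bigl(P[\tilde g_1]-P[\tilde g_2]\bigr)e^{-E[\tilde g_1]}+P[\tilde g_2]\bigl(e^{-E[\tilde g_1]}-e^{-E[\tilde g_2]}\bigr)
\end{equation*}
reduces the proof to estimating the two summands separately. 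For the second I would use $|e^{-a}-e^{-b}|\le|a-b|$, which is valid since $a,b\ge 0$.

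For the exponent difference I would use
\begin{equation*}
\frac{\tilde g_1}{\tilde N_1}-\frac{\tilde g_2}{\tilde N_2}=\frac{\tilde g_1-\tilde g_2}{\tilde N_1}+\tilde g_2\,\frac{\tilde N_2-\tilde N_1}{\tilde N_1\tilde N_2},
\end{equation*}
together with the $S$-uniform bounds $\tilde N_i\ge 1/(2T)$ and $\tilde g_i\le M_T$, and the pointwise inequality $|\tilde N_1(s)-\tilde N_2(s)|\le\norm{\tilde g_1-\tilde g_2}_2$. Estimating the first summand above by $\norm{\tilde g_1-\tilde g_2}_1$ and integrating in $s$ over an interval of length at most $T-1$ yields $|E[\tilde g_1]-E[\tilde g_2]|(t,x)\le C(T-1)\bigl(\norm{\tilde g_1-\tilde g_2}_1+\norm{\tilde g_1-\tilde g_2}_2\bigr)$.

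For the prefactor difference there is nothing to show on $\{x<1\}$. On $\{1\le x\le t\}$ the integrand in $B$ bilinearises as
\begin{equation*}
\tilde g_1(x,y)\tilde g_1(x,x-y)-\tilde g_2(x,y)\tilde g_2(x,x-y)=(\tilde g_1-\tilde g_2)(x,y)\,\tilde g_1(x,x-y)+\tilde g_2(x,y)\,(\tilde g_1-\tilde g_2)(x,x-y),
\end{equation*}
so that with $\tilde g_i\le M_T$, $\int_0^T|\tilde g_1-\tilde g_2|(x,\cdot)\,dy\le\norm{\tilde g_1-\tilde g_2}_2$, and the same $1/\tilde N_i$ manipulation as above, I obtain the pointwise bound $|B[\tilde g_1]-B[\tilde g_2]|(x)\le C\norm{\tilde g_1-\tilde g_2}_2$ with a constant independent of $T-1$.

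Combining the two contributions and using $P[\tilde g_2]\le M_T$ gives the first inequality, the superfluous $(T-1)\norm{\tilde g_1-\tilde g_2}_2$ being absorbed into $L\norm{\tilde g_1-\tilde g_2}_2$. For the second inequality I would integrate the pointwise estimates over $x\in[0,T]$: the $B$-prefactor term picks up a factor $T-1$ since $B[\tilde g_1]-B[\tilde g_2]$ is supported in $\{1\le x\le t\}$, while the exponent term picks up an extra factor at most $T$ (absorbed into $L$) times the already-present $T-1$. The main nuisance is purely bookkeeping, namely tracking which $(T-1)$ factor comes from $s$-integration and which from $x$-integration and verifying that all constants depend only on $T$, $M_T$, and $\Dconst$; no delicate estimate is required once the $S$-uniform bounds are in place.
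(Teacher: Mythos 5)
Your proposal is correct and follows essentially the same route as the paper: the same splitting of $\Gamma[\tilde g_1]-\Gamma[\tilde g_2]$ into a prefactor difference and an exponent difference, the same use of $|e^{-a}-e^{-b}|\le|a-b|$, the bilinearization of $B$ and of $\tilde g/\tilde N$ with the $S$-uniform bounds $\tilde N\ge 1/(2T)$ and $\tilde g\le M_T$, and the same bookkeeping of which terms carry a factor $T-1$ (in particular, that the $B$-difference does not, which is why the first estimate has a bare $L\norm{\cdot}_2$, while the $x$-integration over $[1,t]$ restores the factor $T-1$ in the second estimate). No gaps worth noting.
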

\begin{proof}
Let $(t,x)\in\Omega$. Since $|e^a-e^b|\leq|a-b|$ for all $a,b>0$, we obtain the following
inequalities. For $x<1$ we find that
\begin{equation*}
 |\Gamma[\tilde g_1](t,x)-\Gamma[\tilde g_2](t,x)|      \leq
 h_0{\Dconst}\int_1^t\left|\frac{\tilde g_1(s,s-x)}{\tilde N_1(s)}-\frac{\tilde
g_2(s,s-x)}{\tilde N_2(s)}\right|ds
\end{equation*}
\begin{equation}
\begin{aligned}
       \leq&\  	h_0{\Dconst}\int_1^t\left\{2T|\tilde g_1(s,s-x)-\tilde
g_2(s,s-x)|+M_T\frac{|\tilde N_1(s)-\tilde N_2(s)|}{\tilde N_1(s)\tilde N_2(s)}\right\}ds\\
       \leq&\  	(2h_0T{\Dconst}\norm{\tilde g_1-\tilde g_2}_1+4T^2M_Th_0{\Dconst}\norm{\tilde
g_1-\tilde g_2}_2)\int_1^t ds\\
       \leq&\  	2h_0T{\Dconst}(T-1)\norm{\tilde g_1-\tilde
g_2}_1+4T^2M_Th_0{\Dconst}(T-1)\norm{\tilde g_1-\tilde g_2}_2\label{ineq:norm1}.
\end{aligned}
\end{equation}
%
Now let $x \geq 1$, and set $E[\tilde g](t,x):=\exp(-{\Dconst}\int_{x}^t\frac{\tilde
g(s,s-x)}{\tilde N(s)}ds)$. Then,
\begin{equation*}
 |\Gamma[\tilde g_1](t,x)-\Gamma[\tilde g_2](t,x)|=
 |B[\tilde g_1](x)E[\tilde g_1](t,x)-B[\tilde g_2](x)E[\tilde g_2](t,x)|
\end{equation*}
\begin{equation*}
\begin{aligned}
            \leq&\  	|B[\tilde g_1](x)-B[\tilde g_2](x)|\cdot|E[\tilde g_1](t,x)|+\\
            \qquad&\ 	|B[\tilde g_2](x)|\cdot|E[\tilde g_1](t,x)-E[\tilde g_2](t,x)|\\
            \leq&\ 	|B[\tilde g_1](x)-B[\tilde g_2](x)|+T^2{\Dconst}M_T^2|E[\tilde
g_1](t,x)-E[\tilde g_2](t,x)|.
\end{aligned}
\end{equation*}
We treat the last two summands separately. Analogously to (\ref{ineq:norm1}) we  estimate
\[
|B[\tilde g_1](x)-B[\tilde g_2](x)|\leq 	
(2TM_T{\Dconst}+2{\Dconst}M^2_TT^3)\norm{\tilde g_1-\tilde g_2}_2.
\]
On the other hand, we can estimate $|E[\tilde g_1](t,x)-E[\tilde g_2](t,x)|$ by
\[
\begin{split}
             	\int_x^t\big\{2T{\Dconst}|\tilde g_1(s,s-x)-\tilde
g_2(s,s-x)|&+4M_TT^2{\Dconst}|\tilde N_1(s)-\tilde N_2(s)|\big\}ds\\
            \leq&\ 	2T{\Dconst}(T-1)\norm{\tilde g_1-\tilde g_2}_1+4T^3{\Dconst}\norm{\tilde
g_1-\tilde g_2}_2.
\end{split}
\]
Combining these results we find a constant $L'$ that depends polynomially on
${\Dconst}, T, M_T$ such that
\begin{equation}
\begin{aligned}
|\Gamma[\tilde g_1](t,x)-\Gamma[\tilde g_2](t,x)|  &\leq  L'(T-1) \norm{\tilde g_1-\tilde
g_2}_1+L'(T-1)\norm{\tilde g_1-\tilde g_2}_2, && x<1,\\
|\Gamma[\tilde g_1](t,x)-\Gamma[\tilde g_2](t,x)|  &\leq  L'(T-1) \norm{\tilde g_1-\tilde
g_2}_1+L'\norm{\tilde g_1-\tilde g_2}_2, && x\geq 1.\\
\end{aligned}\label{ineq:norm2}
\end{equation}
To derive the bounds for the second norm we split the interval $[0,T]=[0,1]\cup[1,t]\cup[t,T]$,
and using (\ref{ineq:norm2}) we find
\begin{equation}
  \int_0^T|\Gamma[\tilde g_1](t,x)-\Gamma[\tilde g_2](t,x)|dx\leq L'T(T-1)\norm{\tilde
g_1-\tilde g_2}_1+2L'(T-1)\norm{\tilde g_1-\tilde g_2}_2.\label{ineq:norm3}
\end{equation}
With $L:=\max\{TL',2L'\}$  we then derive from (\ref{ineq:norm2}), (\ref{ineq:norm3}) that
\begin{equation*}
\begin{aligned}
	|\Gamma[\tilde g_1](t,x)-\Gamma[\tilde g_2](t,x)|  &\leq  L(T-1)\norm{\tilde g_1-\tilde
g_2}_1+L\norm{\tilde g_1-\tilde g_2}_2,\\
	\int_0^T|\Gamma[\tilde g_1](t,x)-\Gamma[\tilde g_2](t,x)|dx  &\leq  L(T-1)\norm{\tilde
g_1-\tilde g_2}_1+L(T-1)\norm{\tilde g_1-\tilde g_2}_2.
\end{aligned}
\end{equation*}
The assertions now follow by taking the supremum in the above inequalities.
\end{proof}
Now let $\beta\geq 2L$, where $L$ is as in the proof of Lemma \ref{Lem1}, and define a norm on
$X$ by \begin{align*}
       \norm{\cdot}:=\norm{\cdot}_1+\beta\norm{\cdot}_2.
       \end{align*}
For any $\tilde g_1,\tilde g_2\in S$ we then have
$$\norm{\Gamma[\tilde g_1]-\Gamma[\tilde g_2]}  \leq L(1+\beta)(T-1)\norm{\tilde g_1-\tilde
g_2}_1+L(1+\beta(T-1))\norm{\tilde g_1-\tilde g_2}_2,$$ and it is possible to choose
$T$ such 
that $\beta(T-1)<1/2$ and $L(1+\beta)(T-1)<1/2$. Hence $L(1+\beta(T-1))<3\beta/4$, and this
gives
$$
\norm{\Gamma[\tilde g_1]-\Gamma[\tilde g_2]}  \leq
\frac 12\norm{\tilde g_1-\tilde g_2}_1+\frac 34\beta\norm{\tilde g_1-\tilde g_2}_2
\leq \frac 34\norm{\tilde g_1-\tilde g_2}.
$$
$X$ equipped with $\norm{\cdot}$ is a Banach Space and $S$ is a closed and bounded subset, so
the Banach Fixed
Point Theorem guarantees that $\Gamma$ has a unique fixed point $g\in S$. By construction,
this fixed point solves the differential equation (\ref{Eqn:Model})$_1$ for $t\leq{T}$.
Moreover, since $g(t,t)=B[g](t)$, it also satisfies condition (\ref{eq:gen3}), which is
equivalent to (\ref{Eqn:Model})$_3$.
\bigpar
It remains to prove that there exists a solution for all $1<T<\infty$. This can be done by
standard methods because (i) for each $T>1$ there exists a constant $D=D(T)>0$ as in
Lemma \ref{lemma1}, and (ii) each solution satisfies 
$\int_0^tx\Gamma[\tilde g](t,x)dx=1$ for all $1\leq{t}\leq{T}$. 
%
%
\section{Self-similar solutions} \label{Sec:SSS}
%
%
In this section we describe self-similar solutions to \eqref{Eqn:Model}. These satisfy
\begin{equation*}
g(t,x)=\left\{
\begin{aligned}	
&\alpha(t)G\left(\frac{x}{t}\right)\quad 	&& x\leq
t,\\
&0&& x >t,
\end{aligned}
\right.%
\end{equation*}
where $G:[0,1]\rightarrow \R_0^+$ is a continuously differentiable function, and the mass
constraint \eqref{Eqn:Model}$_3$ requires
 \begin{align*}
	1=\int_0^t xg(t,x)dx=\int_0^t x\alpha
(t)G\left(\frac{x}{t}\right)dx=t^2\alpha(t)\int_0^1 yG(y)dy.
\end{align*}
This means $t^2\alpha (t)$ is a positive constant. By rescaling $\alpha$ and $G$ we can
ensure that this constant is $1$, so each mass preserving self-similar solution takes the
form $g(t,x)=t^{-2}G(x/t)$ with $\int_0^1 yG(y)dy=1$. Using this relation in
\eqref{Eqn:Model}, and substituting $y=x/t$, we get
\begin{equation}
2G(y)+yG'(y)=\frac{\Dconst}{N}G(y)G(1-y),\quad
\label{eq:ss1} 	N=\int_0^1 G(y)dy.
\end{equation}
We first consider a simplified problem
\begin{equation}
2G(y)+yG'(y)=DG(y)G(1-y),\label{eq:ss:aux}	
\end{equation}
where $D>0$ is an arbitrary constant and we do not impose the mass constraint on $G$. At
first, we notice that each solution $G$ to (\ref{eq:ss:aux}) provides a solution $\tilde{G}$
to (\ref{eq:ss1}) via
\begin{equation}
\Dconst=D\int_0^1  G(x)dx,\qquad \tilde{G}(y)=\frac{ G(y)}{\int_0^1 x
G(x)dx}\label{eq:ss:trans}.
\end{equation}
We also observe that (\ref{eq:ss:aux}) is invariant under the scaling
$G\rightsquigarrow\lambda{G}$, $D\rightsquigarrow\lambda^{-1}{D}$. Consequently, in order to
characterize the solution set of \eqref{eq:ss1}, we have to investigate (\ref{eq:ss:aux}) for
only one value of $D$, and then consider how the corresponding solutions transform under
(\ref{eq:ss:trans}).
\begin{lemma}
For any given $D>0$ and $G(1/2)>0$ there exists a unique positive solution of
(\ref{eq:ss:aux}) on $(0,1)$.
\end{lemma}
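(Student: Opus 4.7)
The plan is to reduce \eqref{eq:ss:aux} to a planar Cauchy problem by introducing $u(y):=G(y)$ and $v(y):=G(1-y)$ on $[1/2,1)$. Evaluating \eqref{eq:ss:aux} once at $y$ and once at $1-y$, and using $v'(y)=-G'(1-y)$, one obtains the coupled system
\begin{equation*}
u'(y)=\frac{u(y)\bigl(Dv(y)-2\bigr)}{y},\qquad v'(y)=-\frac{v(y)\bigl(Du(y)-2\bigr)}{1-y},
\end{equation*}
subject to the consistency condition $u(1/2)=v(1/2)=G(1/2)>0$. The right-hand side is $C^1$ on $(0,1)\times(0,\infty)^2$, so Picard--Lindel\"of provides a unique positive $C^1$ solution on some maximal interval $[1/2,y^\ast)\subseteq[1/2,1)$. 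The task then reduces to showing $y^\ast=1$ and reconstructing $G$ on $(0,1/2]$ by symmetry.

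To show $y^\ast=1$ the key step is an a priori bound on $u+v$. Expanding
\begin{equation*}
(u+v)'=Duv\left(\frac{1}{y}-\frac{1}{1-y}\right)-\frac{2u}{y}+\frac{2v}{1-y}
\end{equation*}
and observing that for $y\geq 1/2$ the first two terms are non-positive, one obtains $(u+v)'\leq 2v/(1-y)\leq 2(u+v)/(1-y)$, and Gr\"onwall yields
\begin{equation*}
u(y)+v(y)\leq\frac{G(1/2)}{2(1-y)^2}\qquad\text{for all }y\in[1/2,y^\ast).
\end{equation*}
This rules out blow-up inside $[1/2,1)$. For positivity, the inequalities $u'/u\geq-2/y$ and $v'/v\geq -Du/(1-y)$ combined with the upper bound on $u$ give, via another Gr\"onwall argument, strictly positive lower bounds for $u$ and $v$ on every $[1/2,1-\eps]$. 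The standard ODE continuation principle then forces $y^\ast=1$.

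Finally I set $G(y):=u(y)$ on $[1/2,1)$ and $G(y):=v(1-y)$ on $(0,1/2]$. The values agree at $y=1/2$ by construction, and the one-sided derivatives agree thanks to the identity $u'(1/2)=-v'(1/2)$ that is built into the system; hence $G\in C^1((0,1))$, and a direct substitution shows that it solves \eqref{eq:ss:aux} on $(0,1)$. Uniqueness transfers from the planar Cauchy problem: any positive $C^1$ solution $\tilde G$ of \eqref{eq:ss:aux} with $\tilde G(1/2)=G(1/2)$ produces $(\tilde u,\tilde v)=(\tilde G,\tilde G(1-\cdot))$ solving the same system with the same initial data, so $\tilde G=G$. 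The principal obstacle is the a priori bound of the second paragraph; without it the $1/(1-y)$ coefficient in the $v$-equation threatens finite-$y$ blow-up and the rest of the argument collapses, while with it the remaining steps are routine.
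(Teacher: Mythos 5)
Your proof is correct, and it reaches the conclusion by a genuinely different mechanism than the paper even though the overall architecture (reduce to a planar Cauchy problem posed at $y=1/2$, prove a priori bounds, continue to the endpoints) is parallel. The paper first substitutes $F(y)=y^2G(y)$, which turns \eqref{eq:ss:aux} into $F'(y)=DF(y)F(1-y)/\bigl(y(1-y)^2\bigr)$ and then splits $F$ into its even and odd parts about $1/2$. The payoff of that substitution is that $F$ is automatically increasing, so for $y\in[1/2,1-a)$ the coefficient $F(1-y)$ is bounded by $F(1/2)$ with no further work; non-blow-up follows immediately and no integral inequality is needed. You instead work with the reflected pair $(u,v)=(G(\cdot),G(1-\cdot))$ and substitute the monotonicity trick with a Gr\"onwall bound on $u+v$, exploiting the sign of $1/y-1/(1-y)$ on $[1/2,1)$; the resulting estimate $u+v\leq G(1/2)/\bigl(2(1-y)^2\bigr)$ plays exactly the role that $F(1-y)\leq F(1/2)$ plays in the paper. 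Your route is slightly more computational but more self-contained (no change of unknown, and the source of the control is explicit in the differential inequality), and your explicit verification of the $C^1$ matching at $y=1/2$ and of the transfer of uniqueness from the planar system back to the scalar equation is cleaner than the paper's appeal to a minimal $a$ and a continuation-by-contradiction argument. Both proofs also need the positivity lower bounds you supply to invoke the continuation principle, and your logistic-type estimates for $u$ and $v$ are adequate for that.
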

\begin{proof}
We multiply (\ref{eq:ss:aux}) by $y$ and substitute $F(y)=G(y)y^2$ to obtain
\begin{equation}
F'(y)=\frac{F(y)F(1-y)}{y(1-y)^2}.\label{eq:ss3}
\end{equation}
Next, we decompose $F$ into its odd and even parts with respect to $1/2$ by setting
\begin{eqnarray*}
F(y)  &=&F_e(y)+F_o(y),\\
F(1-y)&=&F_e(y)-F_o(y),
\end{eqnarray*}
so (\ref{eq:ss3}) transforms into 
\begin{eqnarray}
\begin{aligned}F'_e(y)&=\frac{D(F_e(y)^2-F_o(y)^2)}{y^2(1-y)^2}\left(y-\frac{1}{2}\right),\\
F'_o(y)&=\frac{D(F_e(y)^2-F_o(y)^2)}{2y^2(1-y)^2}.
\end{aligned}\label{eq:ss4}
\end{eqnarray}
Since (\ref{eq:ss4}) is locally Lipschitz for $y\in(0,1)$, the local existence and uniqueness of
solutions to the initial value problems is granted. In particular, for given values $F_o(1/2)=0$
and $F_e(1/2)>0$ we find the smallest $a\in[0,1/2)$ such that there exists a unique solution
to (\ref{eq:ss4}) on $(a,1-a)$. The function $F(y)=F_e(y)+F_o(y)$ then solves (\ref{eq:ss3})
on $(a,1-a)$, and satisfies
\begin{align*}
F(y)=F(1/2)\exp{\left(\int_{1/2}^y \frac{DF(1-z)}{z(1-z)^2}dz\right)}.
\end{align*}
In particular, $F$ is positive and by (\ref{eq:ss3}) it is also increasing on
$(a,1-a)$. If $a=0$ then we are done. Otherwise we know that $DF(1-y)y^{-1}(1-y)^{-2}$ is
bounded for $y\in [1/2,1-a)$, which means that $F$ does not blow-up at $1-a$. Thus, 
$F_e$, $F_o$ are also bounded on $(a,1-a)$ and we can extend the solution to $[a,1-a]$. Due to
the local existence result, we can then extend the solution to an interval $(b,1-b)$ with $b\in
[0,a)$, which contradicts the minimality of $a$.  Hence we have $a=0$, and the proof is
complete.
\end{proof}
\begin{figure}[ht!]
\centering{%
\includegraphics[width=.32\textwidth]{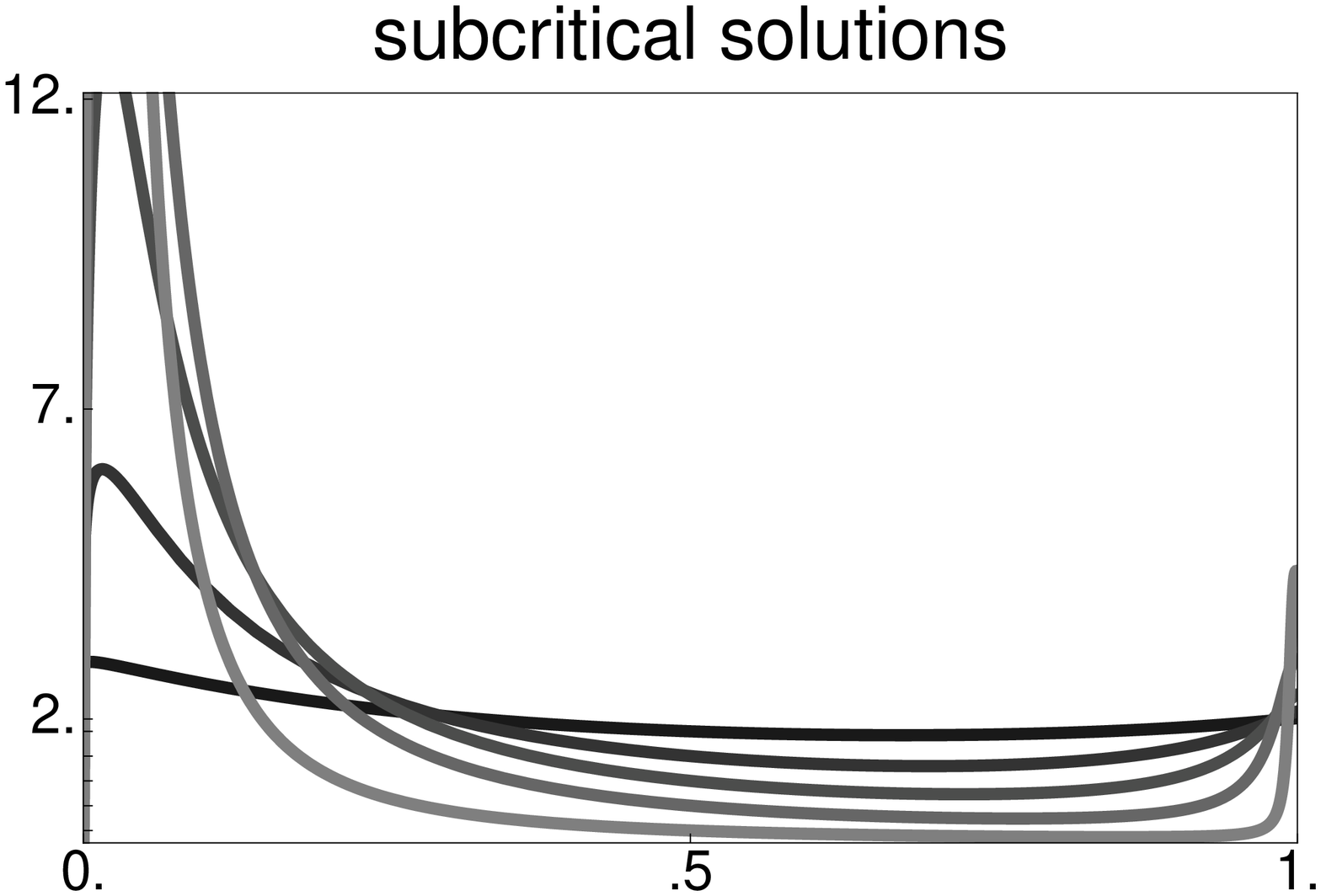}%
\includegraphics[width=.32\textwidth]{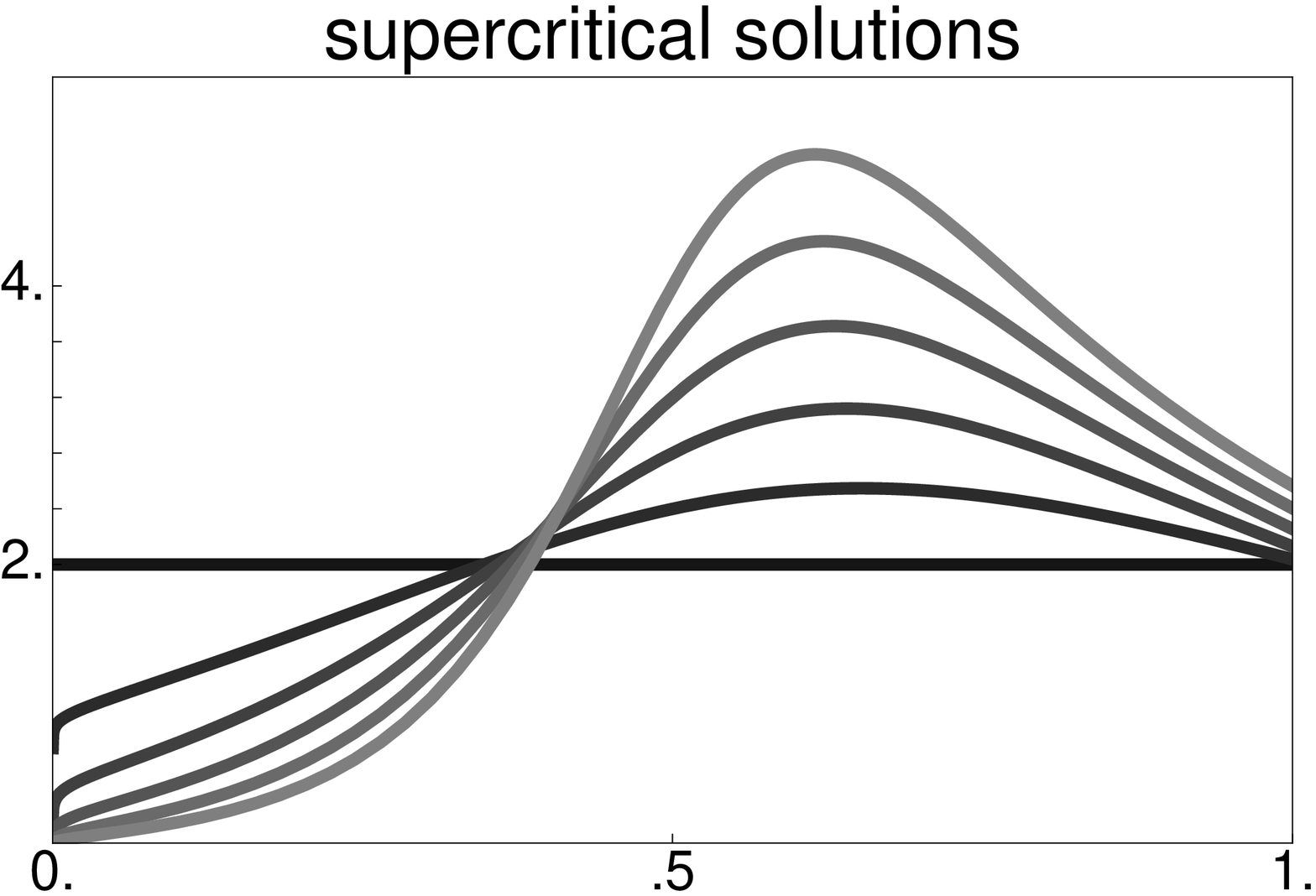}%
\includegraphics[width=.32\textwidth]{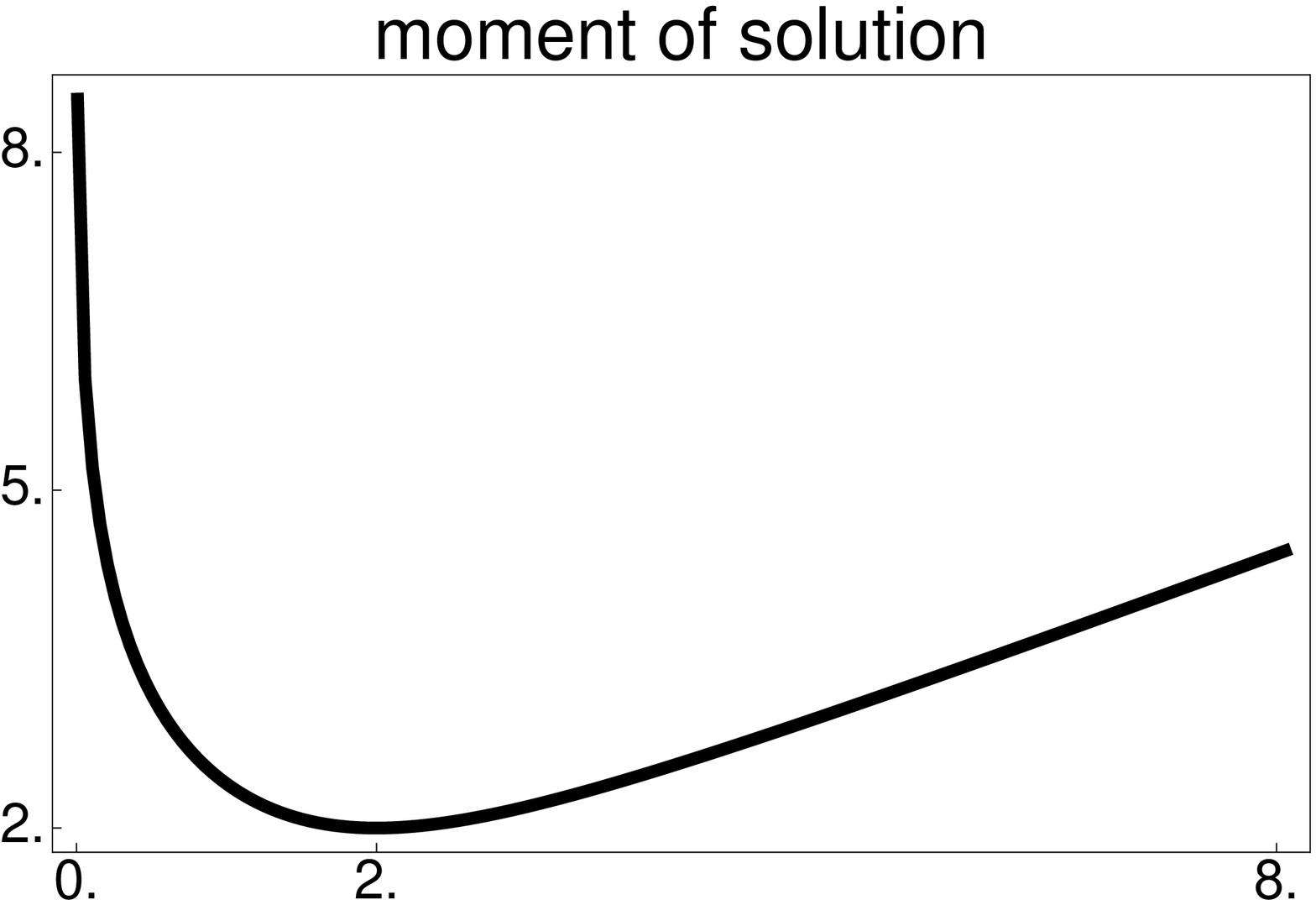}%
}%
\caption{%
Solutions $G(y)$ to \eqref{eq:ss:aux} with $D=1$ for $y\in[0,\,1]$ for
different values of $G(1/2)<2$ (left) and $G(1/2)>2$ (center);
moment $N(G)=\int_0^1G(y)d{y}$ in dependence on $G(1/2)$ (right).
}%
\label{fig:sss1}
\end{figure}
We next discuss some numerical ODE simulations that illustrate how the solutions of
\eqref{eq:ss:aux} with $D=1$ depend on the value of
$G(1/2)$. For
$G(1/2)=2$ we get the trivial solution $G\equiv 2$. From now on, we refer to solutions with
$G(1/2)>2$ as supercritical and to those with $G(1/2)<2$ as subcritical. These two types
behave rather differently, see Figure \ref{fig:sss1}, which shows the solutions for
$G(1/2)\in\{0.2,\,0.6,\,1.0,\,1.4,\,1.8\}$ and
$G(1/2)\in\{2.0,\,2.4,\,2.8,\,3.2,\,3.6,\,4.0\}$. Our numerical results indicate that each
supercritical solution has precisely one local maximum between $1/2$ and $1$ but no local
minimum. On the other hand, a subcritical solution has two local maxima close to $0$ and $1$,
and a local minimum between $1/2$ and $1$, compare Figure
\ref{fig:sss2}, where 'variation near $1$' refers to $G(y)-G(1)$.
\begin{figure}[ht!]
\centering{%
\includegraphics[width=.305\textwidth]{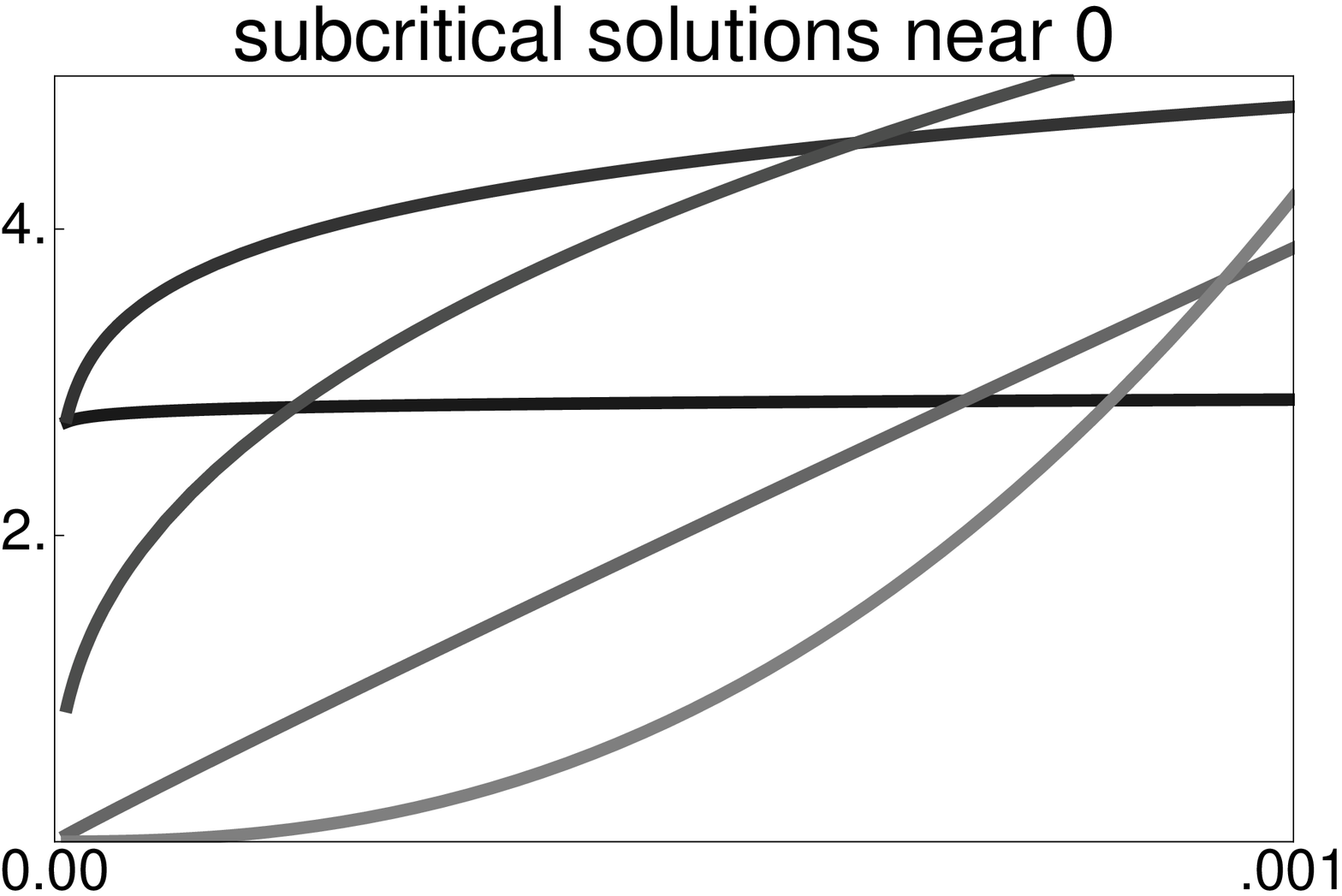}%
\includegraphics[width=.305\textwidth]{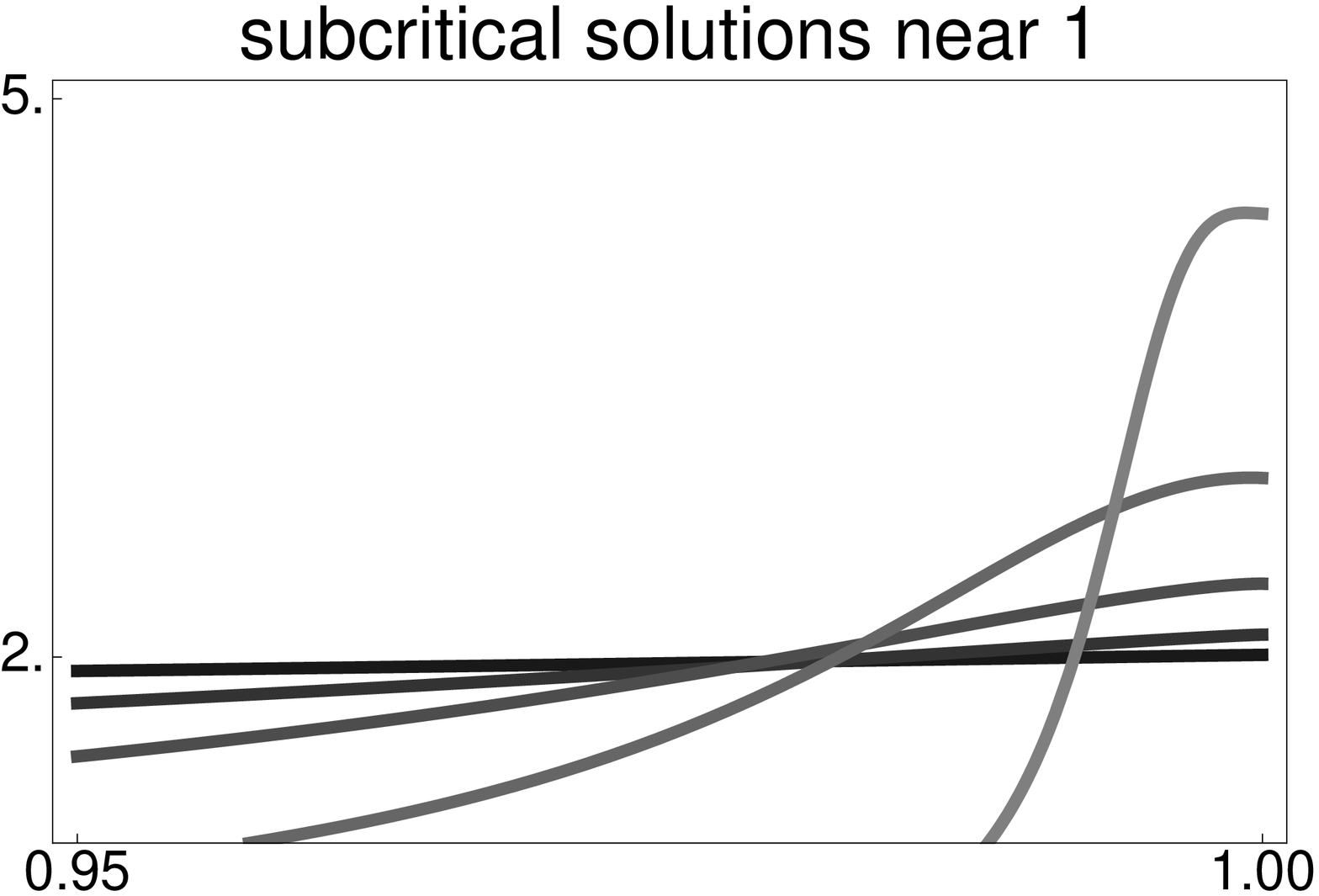}%
\includegraphics[width=.33\textwidth]{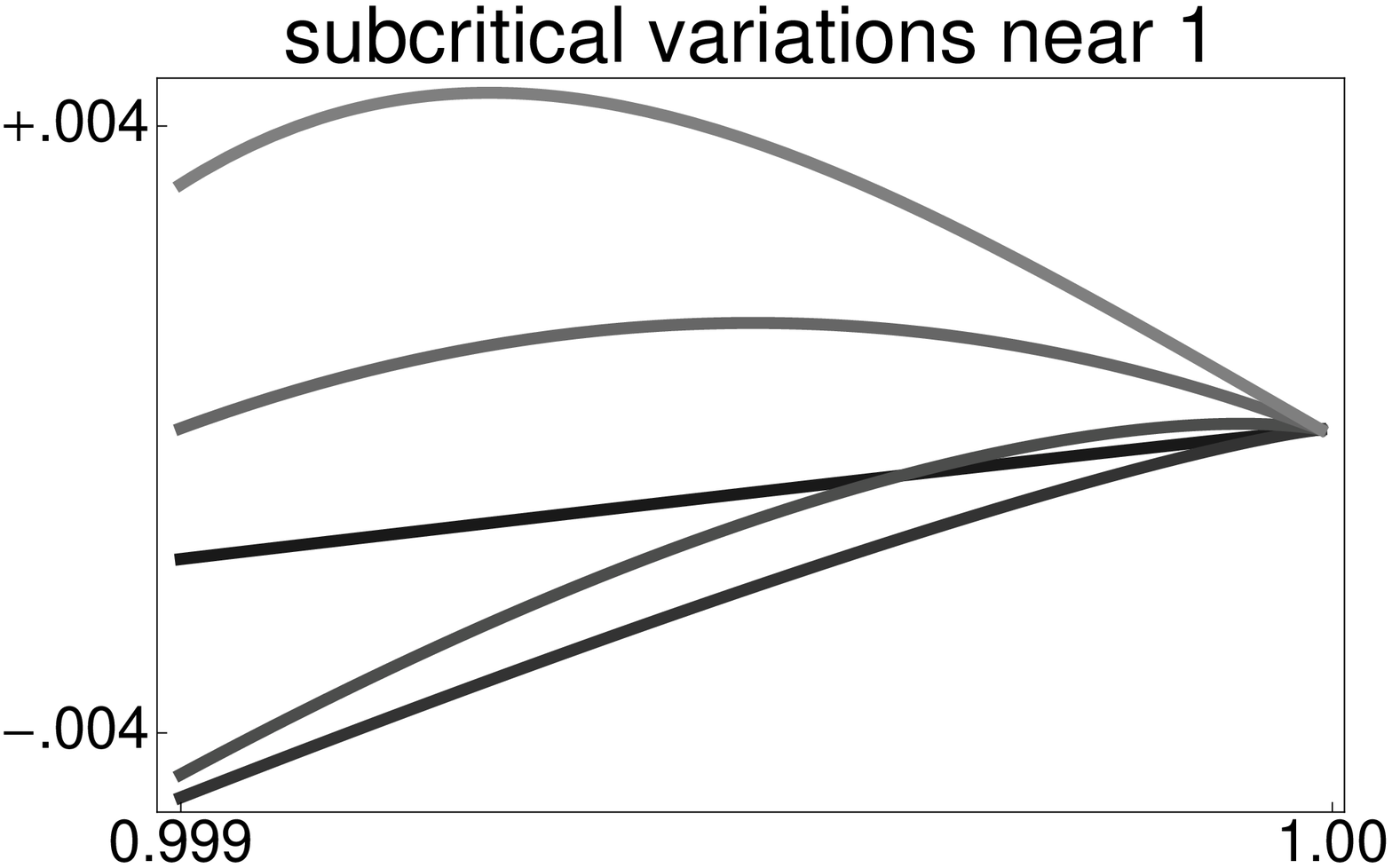}%
}%
\caption{Subcritical solutions from Figure \ref{fig:sss1} for $y\approx0$ and $y\approx1$.}
\label{fig:sss2}
\end{figure}
\par
Figure \ref{fig:sss1} also shows how $\int_0^1 G(y)dy$ depends on $G(1/2)$. Based on this, we
conjecture that for every ${\Dconst}>2$ there exist two solutions to (\ref{eq:ss1}), one
having a subcritical and the other having a supercritical shape. For ${\Dconst}=2$ there
is a trivial self-similar solution. For ${\Dconst}<2$ there seem to be no self-similar
solution. To support this conjecture, we now prove that there is no self-similar solution for
${\Dconst}\leq 1$: Integrating \eqref{eq:ss1} we find $\lim_{z\rightarrow 1} G(z)=N$, so each
solution to (\ref{eq:ss1}) satisfies $G(y)\sim y^{-2+{\Dconst}}$ as $y\rightarrow 0$,
which contradicts $\int_0^1G(y)dy<\infty$. For $1<\Dconst<2$, however, this argument does not apply 
but our numerical results indicate there is still no singular solution.
\newcommand{\NB}{M_\mathrm{b}}
\section{Long-time behavior}\label{Sec:LTB}
%
%
To investigate the long time behaviour of solutions to (\ref{Eqn:Model}) by numerical
simulations we derive a discrete "box model" which follows naturally from the physical
interpretation. We assume the number of initial boxes $\NB$ is sufficiently large (we choose
$\NB=200$ for all simulations) and consider the discrete times $t=\eps{j}$
with $j\in\mathbb{N}$ and $\eps=1/\NB$. Moreover, we denote by
$G(j,i)$ the number of particles with size $x\in(\eps{i}-\eps,\eps{i})$ at time
$t=1+\eps{j}$, that means 
\begin{align*}
G(j,i)={\eps}^{-1}\int_{\eps(i-1)}^{\eps{i}}g(1+\eps{j},x)dx 
\end{align*}
for all integers $j\geq0$ and $i=1,\ldots,\NB+j$. The disrete analogue to the evolution
equation (\ref{Eqn:Model}) is then given by
\begin{align*}
\frac{G(j+1,i)-G(j,i)}{\eps}=\left\{\begin{aligned}
  &-\frac {\Dconst}{{N}(j)}G(j,i)G(j,\NB+j+1-i)\quad && 0< i\leq j+\NB,\\
  &0&& i>j+\NB+1,
\end{aligned}
\right.
\end{align*}
where $N(j)=\eps\sum_{i=1}^{j+\NB} G(j,i)$ and the value of $G(j,j+\NB+1)$ is
determined by the conservation of mass, i.e.,
\begin{align*}
G(j,j+\NB+1)=\frac{\eps}{2}\sum_{i=1}^{j+\NB}\frac {\Dconst}{{N}(j)}G(j,i)G(j,\NB+j+1-i).
\end{align*}
%
%
\bigpar
We now present our numerical results for three different values of ${\Dconst}$ and  three
different sets of initial data. More precisely, we consider ${\Dconst}=1, 2, 3$ and assume that the
initial data have Gaussian distributions with dispersion 0.3 and center at either $0.25$,
$0.5$, or $0.75$.
\par
For ${\Dconst}>2$ we expect convergence to one of the self-similar solutions. Numerical
simulations suggest that the solution converges as $t\rightarrow\infty$ to the supercritical
self-similar solution that corresponds to ${\Dconst}$. The same happens if the initial data
is very close to the subcritical self-similar solution, and thus we can conclude that subcritical
solutions are unstable. We failed to find a rigorous proof for this assertion, but the numerical
evidence is strong. In Figure \ref{fig1}, we plot the scaled distributions after 0, 200, 1000
and 25000 steps (0, 1000, 25000 and 150000 if the center is at 0.25) for the three  sets of 
initial data described above. The corresponding times are given by 0, 1, 5, and 125 (0, 5,
125, and 750). Along with these smooth curves we dotted the graph of the self-similar
solution for ${\Dconst}=3$. As we see, the convergence is slowest when the center of the
initial data is at 0.25.
\begin{figure}[ht!]
\includegraphics[width=0.3\textwidth]{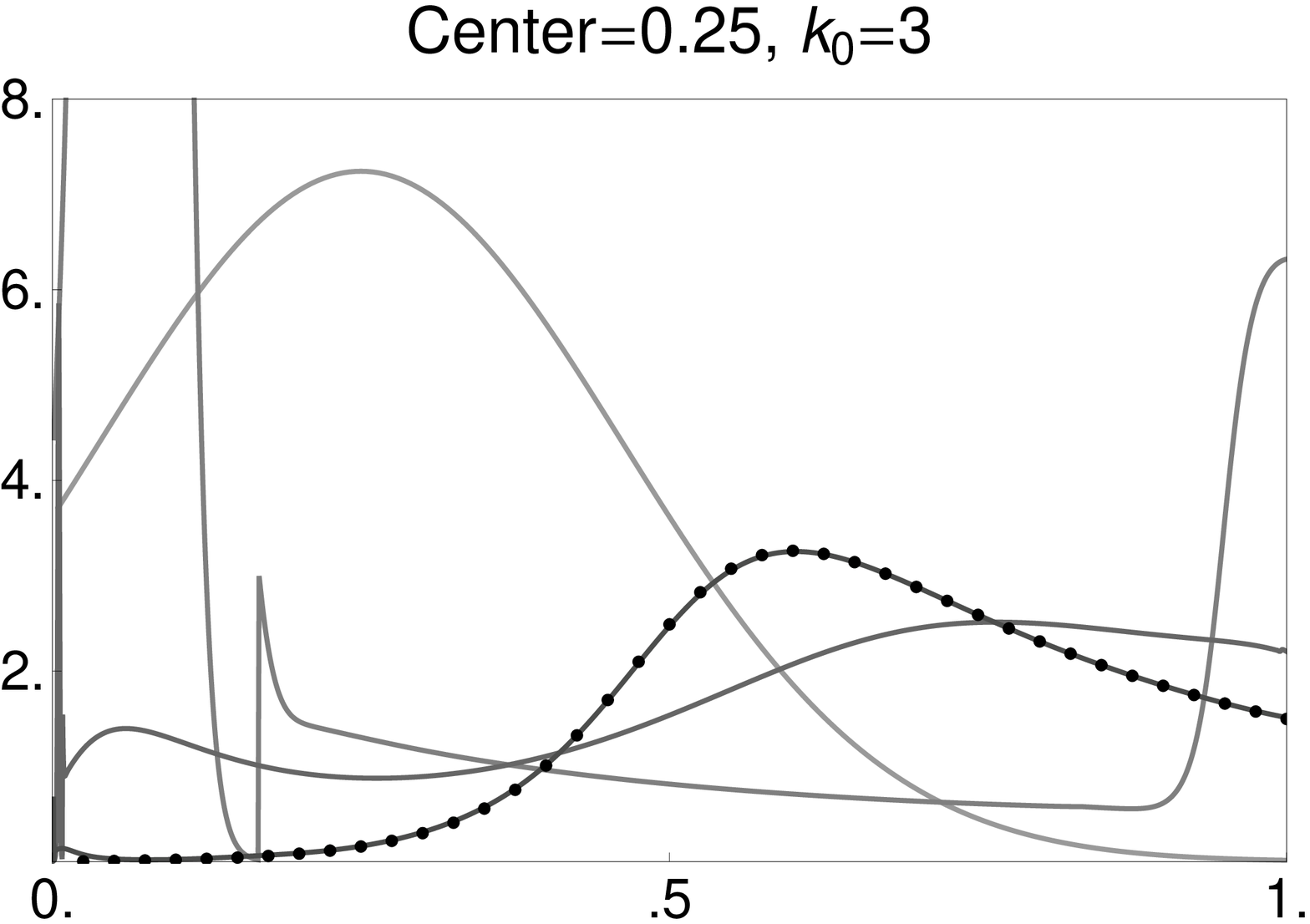}
\includegraphics[width=0.3\textwidth]{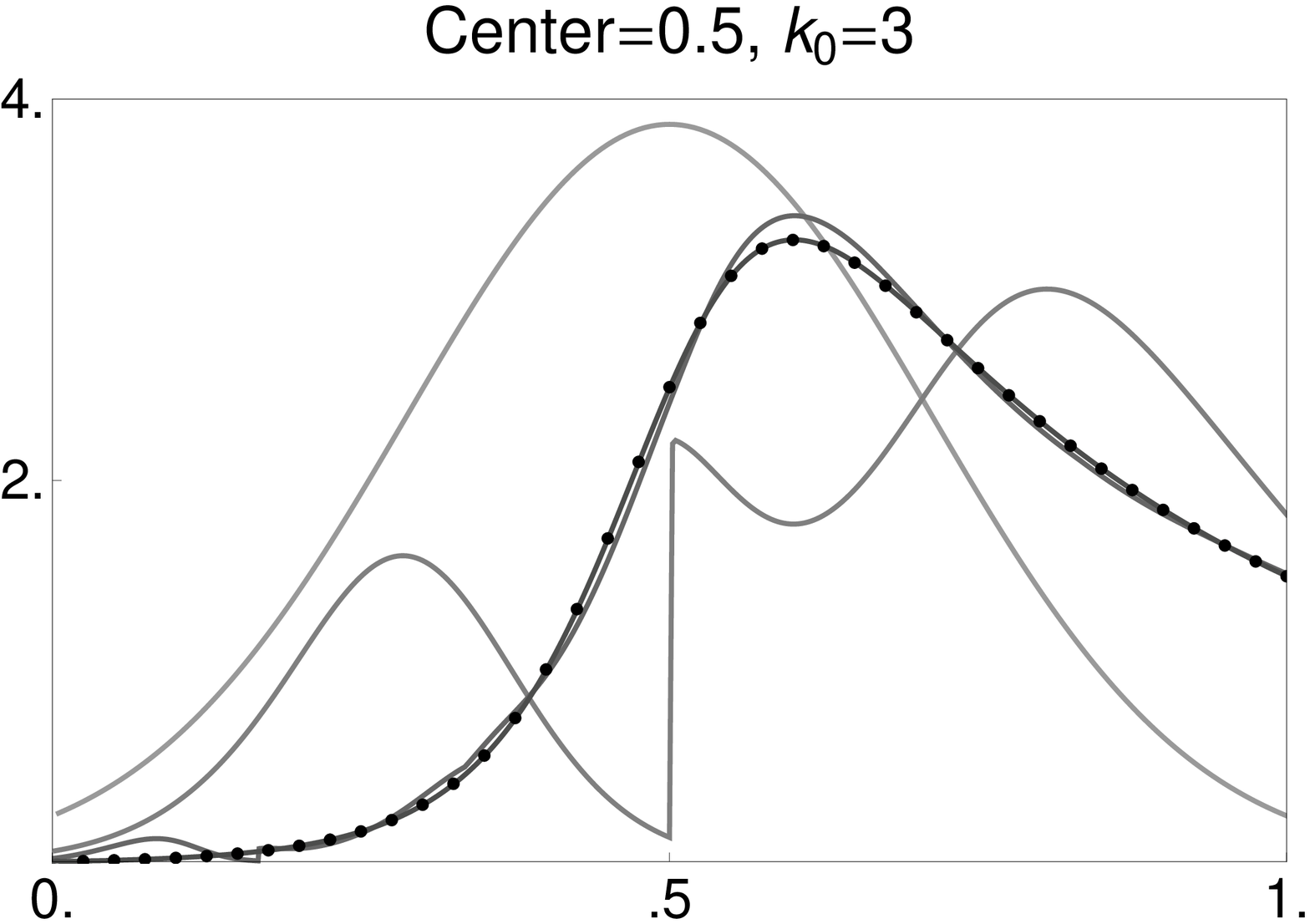}
\includegraphics[width=0.3\textwidth]{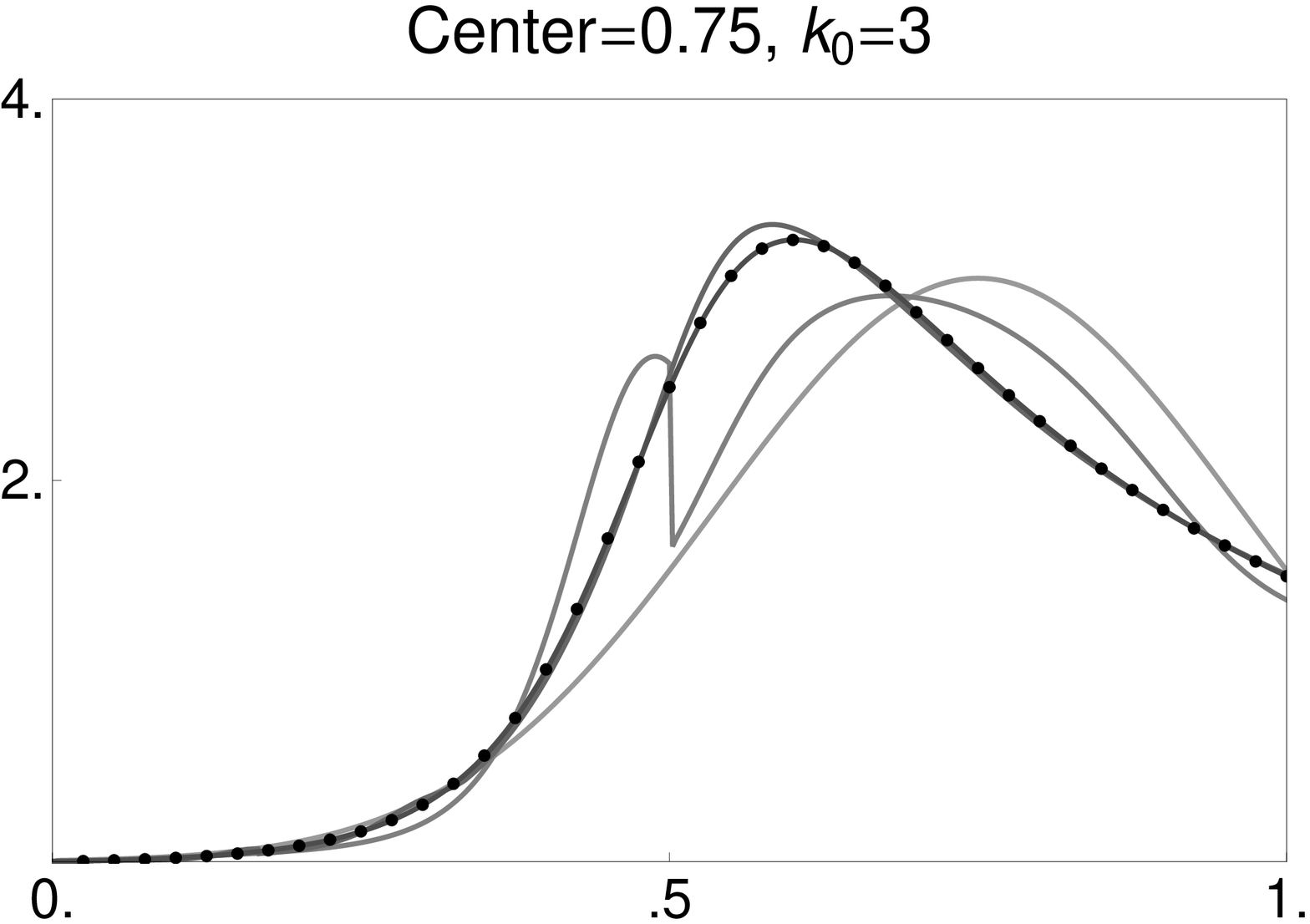}
\caption{Convergence to self-similar form for ${\Dconst}=3$}
\label{fig1}
\end{figure}
\par
For ${\Dconst}<2$ we observe a completely different behaviour. We do not see any convergence
in the self-similar scaling. On the contrary, the solutions apparently converge  pointwise in
the original variables to a limit that depends on the initial data. We will prove the
corresponding statement rigorously in Proposition \ref{T.main} for ${\Dconst}<1/3$. Figure
\ref{fig2} shows the unscaled distributions for ${\Dconst}=1$, the initial data is the same
as before. The five smooth curves represents numerical distributions after 0, 200, 1000, 5000
and 25000 steps.
\begin{figure}[ht!]
\includegraphics[width=0.3\textwidth]{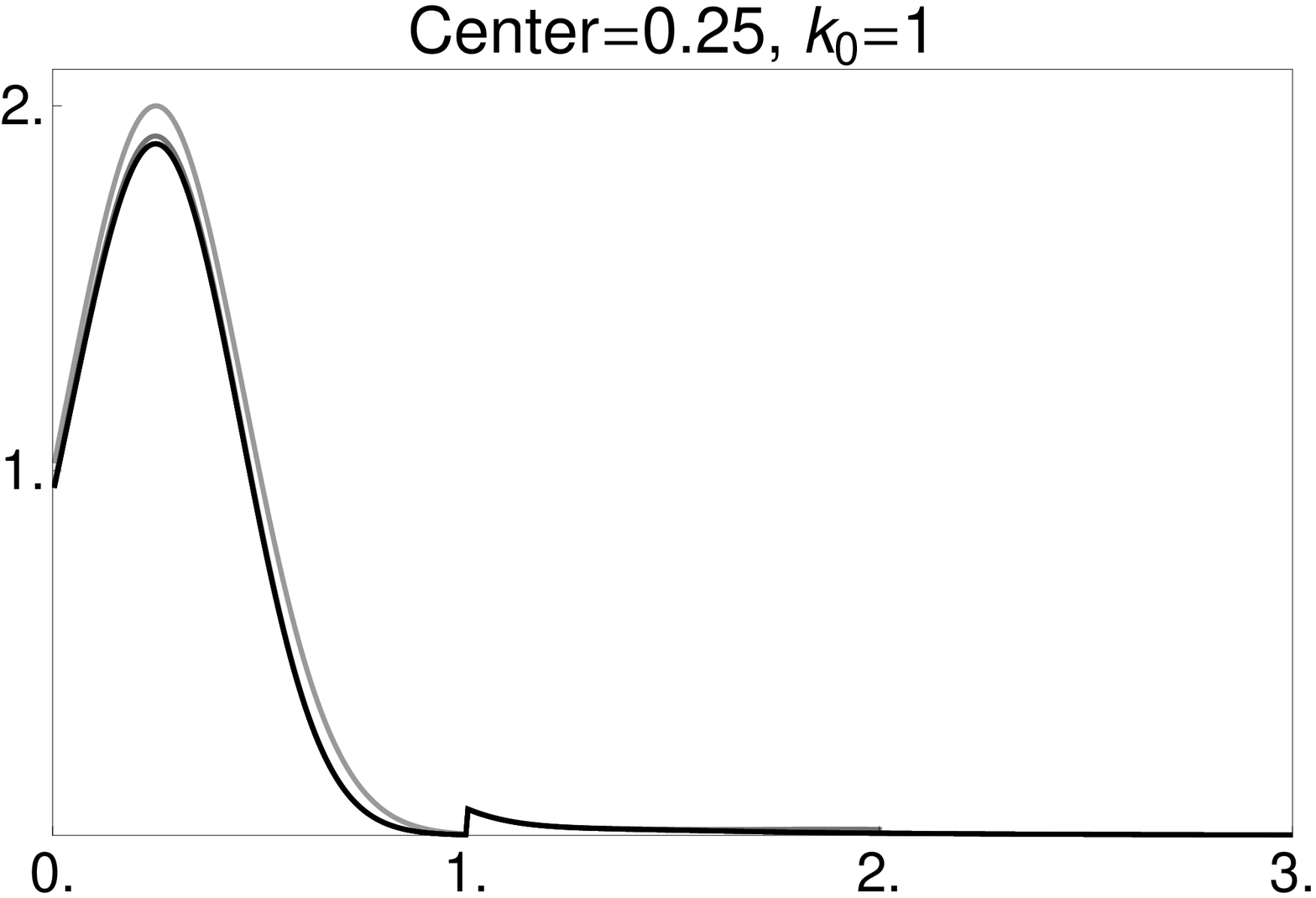}
\includegraphics[width=0.3\textwidth]{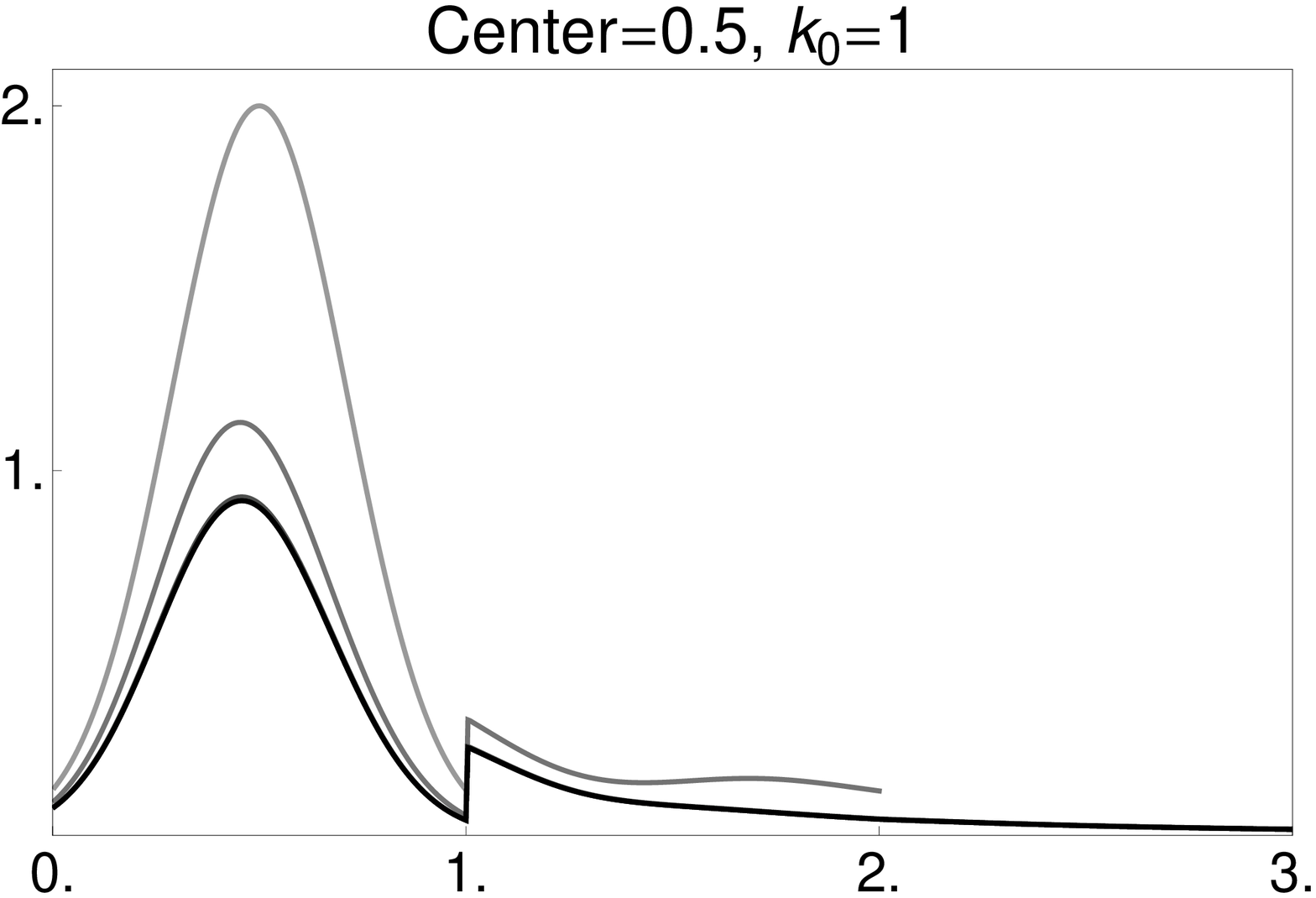}
\includegraphics[width=0.3\textwidth]{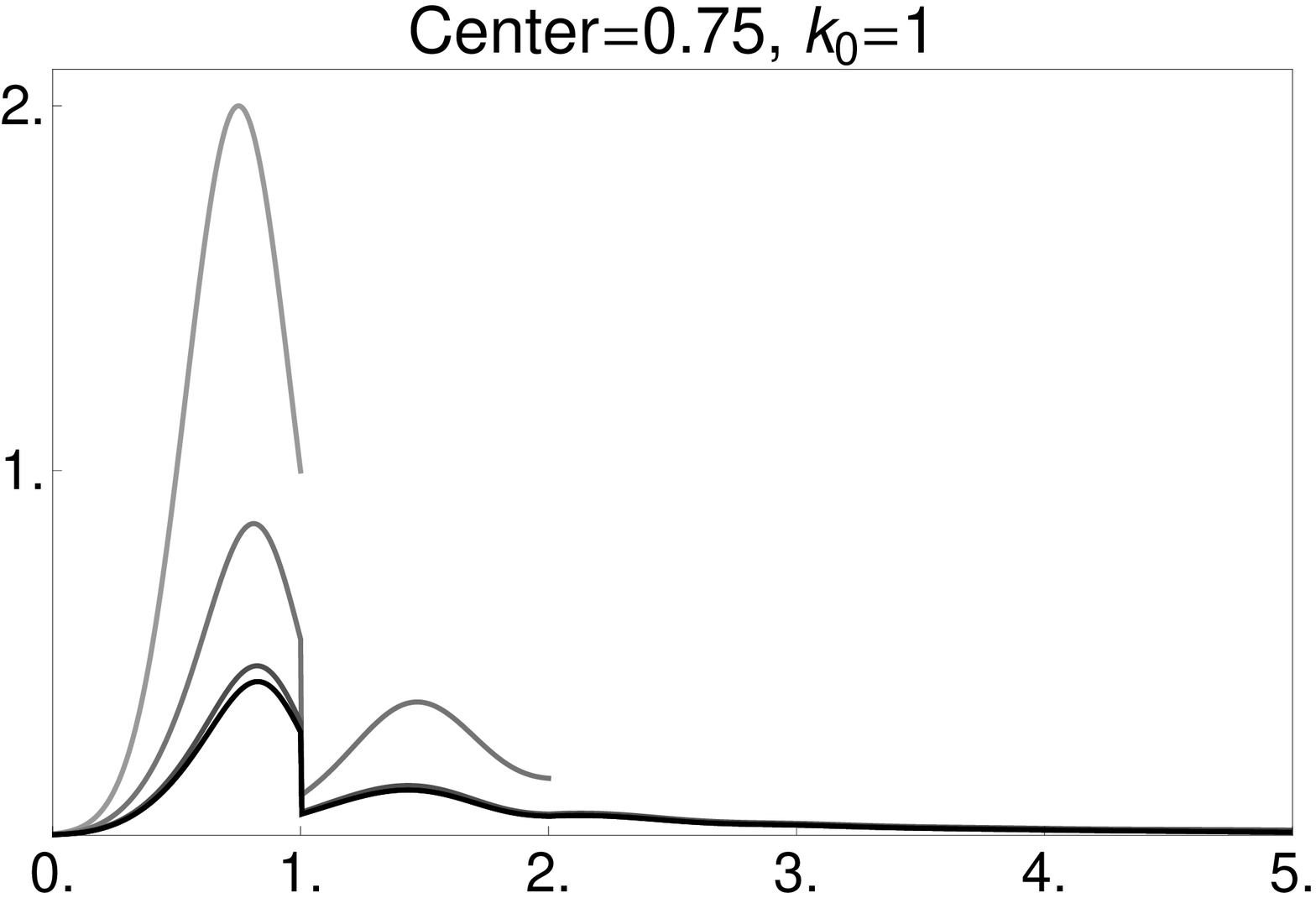}
\caption{Convergence in the original variables for ${\Dconst}=1$}
\label{fig2}
\end{figure}
\par
For ${\Dconst}=2$ we expect convergence to the trivial self-similar solution but the
numerical simulations do not support this assertion. We can conclude that either the long
time behaviour is more complicated or the convergence is extremely slow. Figure
\ref{fig3}
contains the distributions after 0, 1000, 5000, 25000 and 150000 steps and the dotted
plot of the self-similar solution. In the first picture we observe a behavior similar to that
for ${\Dconst}<2$, that means the mass is cumulated near the origin. It will possibly eventually
disappear but this was not the case after any number of steps that we were able to simulate.
This happens also for ${\Dconst}>2$ if the initial data is more
cummulated. For example, if we choose dispersion equal to 0.2 then 150000 steps 
is not enough to see the convergence for ${\Dconst}=3$ and center at $0.25$. This
observation is not surprising because if the initial data is supported on $[0,1/2)$, then the
evolution cannot start at all.
\begin{figure}
\includegraphics[width=0.3\textwidth]{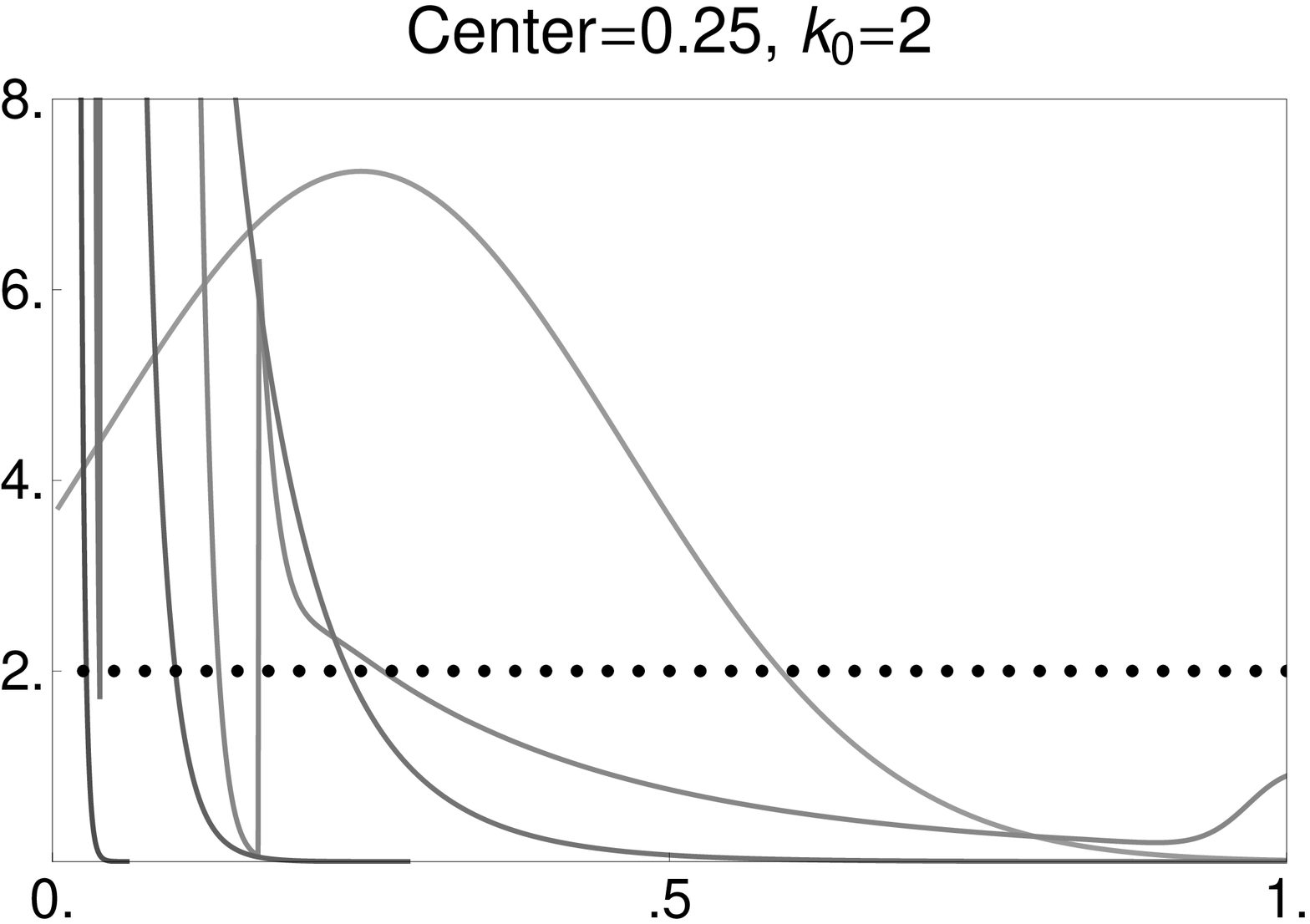}
\includegraphics[width=0.3\textwidth]{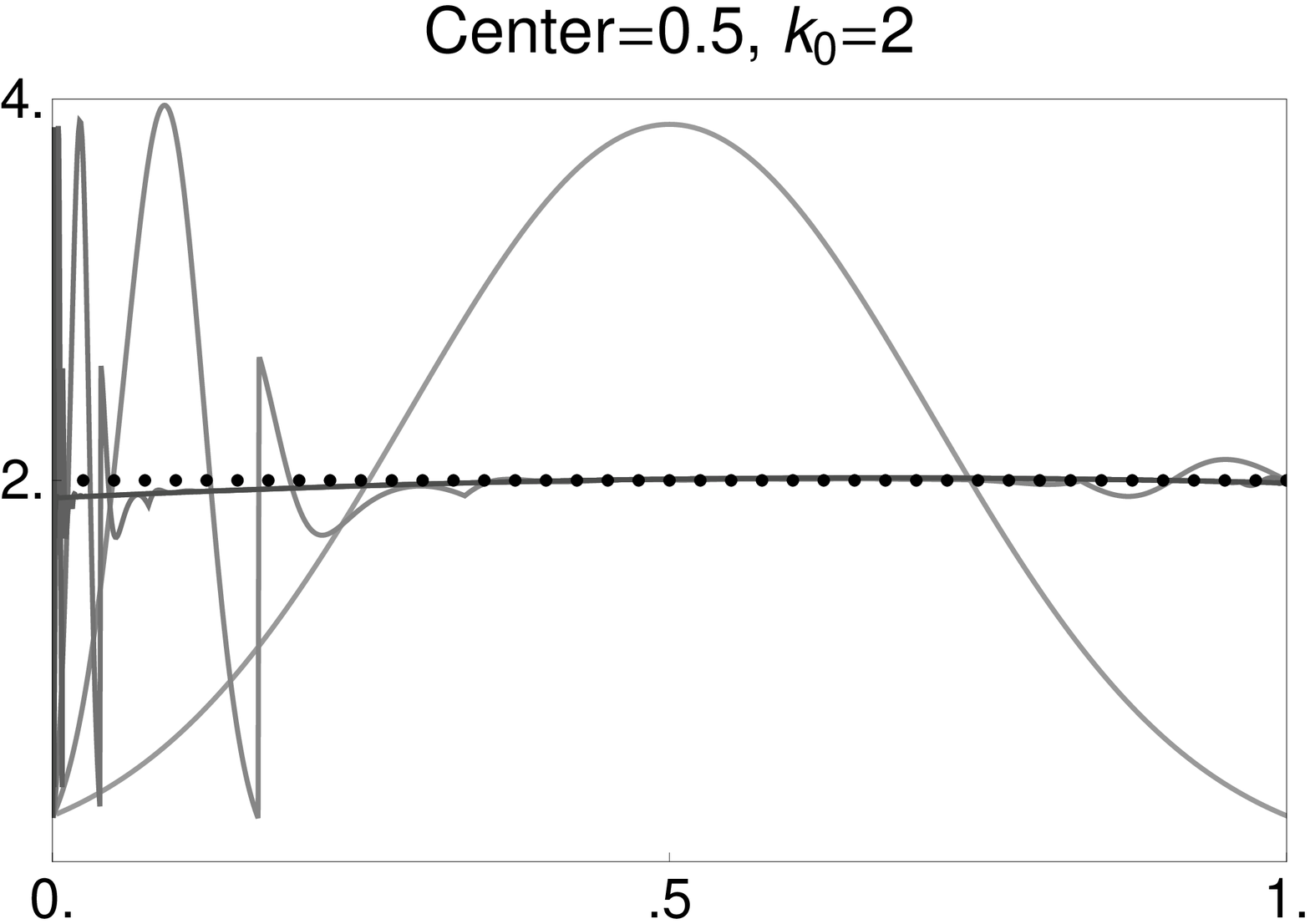}
\includegraphics[width=0.3\textwidth]{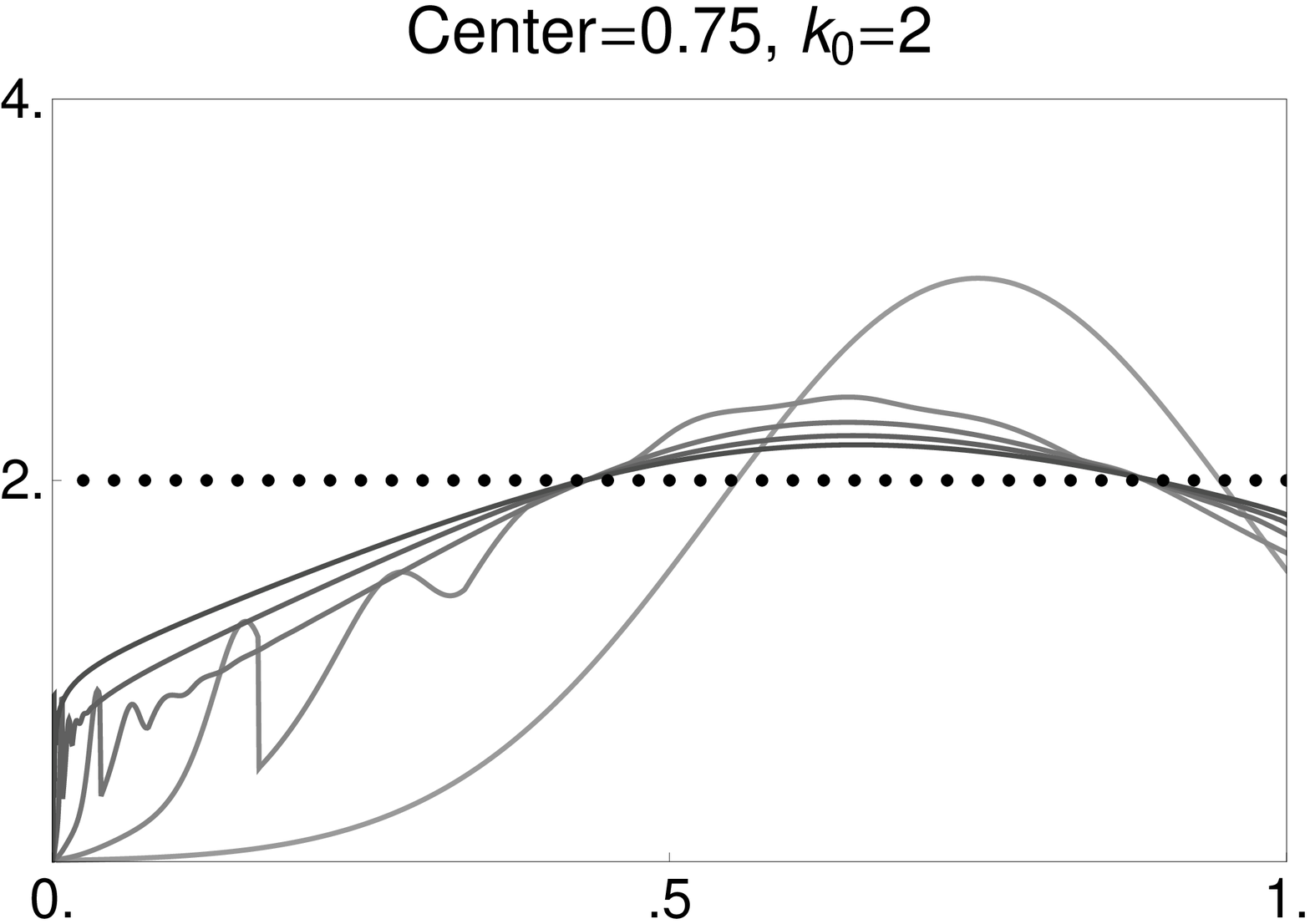}
\caption{Evolution in self-similar variables for $d=2$}
\label{fig3}
\end{figure}

We conclude this section with a rigorous proof of a theorem supporting the conjecture that for
${\Dconst}<2$ there exists a limit function
\begin{equation}
\label{eq:ltb}
\lim_{t\rightarrow \infty} g(t,x)=:g_{\infty}(x)>0.
\end{equation}
Notice that \eqref{eq:ltb} implies that the rescaled function $t^2g(t,yt):[0,1]\rightarrow\R$
converges, in the sense of probability measures, to a Dirac measure with positive mass.
\begin{proposition}
\label{T.main}
\begin{enumerate}
\item[i)] %
If there exists $c_0>0$ such that $N(t)\geq c_0$ for all $t\geq 1$, then 
the relation (\ref{eq:ltb}) is satisfied.
\item[ii)]
If ${\Dconst}<1/3$ then $N(t)>N(1)/2$ for all $t\geq 1$.
\end{enumerate}
\end{proposition}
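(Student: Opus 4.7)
Both parts rest on two elementary observations drawn from the formulas in Section~\ref{Sec:GEU}. (A) The identity $N'(t) = -g(t,t)$: differentiating $N(t)=\int_0^t g(t,x)\,dx$ via Leibniz' rule yields $N'(t) = g(t,t) + \int_0^t \partial_t g(t,x)\,dx$, and inserting (\ref{Eqn:Model})$_1$ together with (\ref{eq:gen3}) rewrites $\int_0^t \partial_t g(t,x)\,dx = -2 g(t,t)$. Hence $N$ is monotone decreasing and $\int_1^\infty g(s,s)\,ds \leq N(1)$. (B) The pointwise bound $g(t,x) \leq P(x)$, where $P(x) := g_\ini(x)$ for $x \in [0,1)$ and $P(x) := g(x,x) = B[g](x)$ for $x \geq 1$; this is immediate from (\ref{Eqn:MildSolution}), since both exponential factors there are at most $1$. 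Combining (A) and (B) gives the key integrability bound $\int_0^\infty P(u)\,du \leq 2 N(1)$.

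For part (i), I would first note that (\ref{Eqn:Model})$_1$ forces $\partial_t g(t,x)\le 0$ for $t>t_0:=\max(1,x)$, so $g(\cdot,x)$ is non-increasing and the limit $g_\infty(x):=\lim_{t\to\infty}g(t,x)\ge 0$ exists. To get strict positivity I would control the exponent in (\ref{Eqn:MildSolution}) using the hypothesis $N(s)\ge c_0$ together with $g(s,s-x)\le P(s-x)$ from (B). After the substitution $u=s-x$,
\[
\Dconst \int_{t_0}^t \frac{g(s, s-x)}{N(s)}\,ds \;\leq\; \frac{\Dconst}{c_0} \int_0^\infty P(u)\,du \;\leq\; \frac{2 \Dconst N(1)}{c_0}=:C,
\]
so $g(t,x)\ge g(t_0,x)\,e^{-C}$ uniformly in $t$. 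Passing to the limit, $g_\infty(x)>0$ wherever the starting profile is positive; since $\int_0^1 x g_\ini(x)\,dx=1$ forces $g_\ini$ to be non-trivial, this yields the non-degenerate limit asserted in (\ref{eq:ltb}).

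For part (ii), I would argue by contradiction. Since $N$ is continuous and strictly decreasing from $N(1)>0$, if $N$ ever dropped to $N(1)/2$ there would be a first such $t^*>1$ with $N(t^*)=N(1)/2$. Multiplying $N'(t)=-(\Dconst/(2N(t)))\int_0^t g(t,x)g(t,t-x)\,dx$ by $2N(t)$ and inserting $g(t,\cdot)\le P$ from (B) gives
\[
-\frac{d}{dt} N(t)^2 \;=\; \Dconst \int_0^t g(t,x)g(t,t-x)\,dx \;\leq\; \Dconst\,(P * P)(t).
\]
Integrating on $[1,t^*]$ and bounding $\int_1^{t^*}(P*P)(t)\,dt\leq(\int_0^{t^*}P)^2$ (extend the domain $\{(x,y):x+y\in[1,t^*]\}$ to $[0,t^*]^2$ by Fubini), while using (A) to compute $\int_0^{t^*}P = N(1)+(N(1)-N(t^*))=3N(1)/2$, one obtains
\[
\tfrac{3}{4}N(1)^2 \;=\; N(1)^2 - N(t^*)^2 \;\leq\; \Dconst\left(\tfrac{3}{2}N(1)\right)^{\!2} \;=\; \tfrac{9}{4}\Dconst\, N(1)^2,
\]
i.e.\ $\Dconst \geq 1/3$, contradicting $\Dconst<1/3$.

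The main difficulty is really just bookkeeping: observations (A) and (B) are both one-line consequences of Section~\ref{Sec:GEU}, and the rest is a convolution estimate. The threshold $\Dconst=1/3$ arises purely from balancing the factors $3/4$ and $(3/2)^2$; since the estimate uses only the $L^1$-norm of $P$, one should not expect $1/3$ to be sharp, consistent with the numerical prediction that the true transition sits at $\Dconst=2$.
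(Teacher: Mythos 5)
Your proof is correct and follows essentially the same route as the paper's: part (i) bounds the exponent in the mild-solution formula using $N(s)\ge c_0$ together with the integrability of $s\mapsto g(\max(1,s),s)$, and part (ii) integrates $-\tfrac{d}{dt}N(t)^2$, bounds the resulting double integral by $\bigl(2N(1)-N(t)\bigr)^2$ via $\int_0^t g(\max(1,s),s)\,ds=2N(1)-N(t)$, and deduces $\Dconst\ge 1/3$ at a first crossing time. Your observations (A) and (B) simply make explicit the identity $N'(t)=-g(t,t)$ and the monotonicity/change-of-variables steps that the paper uses inline (in particular, they justify the ``$<\infty$'' that the paper's part (i) leaves unexplained).
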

\begin{proof}
i) Suppose that $N(t)\geq c_0$. From \eqref{Eqn:Model} and \eqref{Eqn:MildSolution}
with $\tilde{g}=g$ we infer that 
\begin{align*}
g(t,x)=g(max(1,x),x)\exp{\left(-{\Dconst}\int_{max(1,x)}^t
\frac{g(s,s-x)}{N(s)}ds\right)}. 
\end{align*}
By assumption, we also have
\begin{align*}
\int_{max(1,x)}^t \frac{g(s,s-x)}{N(s)}ds \leq \frac{1}{c_0}\int_{max(1,x)}^t
g(s,s-x)ds<\infty,
\end{align*}
so $g(t,x)$ is bounded from below. Since it is also decreasing in $t$ for all $x$, 
there exists $g_\infty(x)$ as in \eqref{eq:ltb}.
\par 
ii) Due to (\ref{Eqn:Model}), we have $-2N(t)N'(t)={\Dconst}\int_0^t g(t,x)g(t,t-x)dx$ and
by integration we obtain
\begin{equation}
\begin{aligned}
N(1)^2-N(t)^2 	&= 	{\Dconst}\int_1^t\int_0^s g(s,x)g(s,s-x)dxds\\
		&=	{\Dconst}\int_0^t\int_{max(1,x)}^t g(s,x)g(s,s-x)dsdx\\
		&\leq	{\Dconst}\int_0^t g(max(1,x),x)\int_{max(1,x)}^t g(s,s-x)dsdx\\
		&\leq	{\Dconst}\int_0^t g(max(1,x),x)\int_0^t g(max(1,s),s)dsdx\\
		&\leq	{\Dconst}(2N(1)-N(t))^2,
\end{aligned}\label{eq:ltb2}
\end{equation}
where the last inequality holds since
$$\int_0^t g(max(1,s),s)ds=\int_0^1 g(1,s)ds+\int_1^t g(s,s)ds=N(1)+(N(1)-N(t)).$$
Now suppose that there exists $t>1$ such that $N(t)= N(1)/2$. By (\ref{eq:ltb2}) we get ${\Dconst}\geq 1/3$
and using the continuity of $N$, we conclude that
${\Dconst}<1/3$ implies $N(t)>N(1)/2$ for all $t$.
\end{proof}
\paragraph*{Acknowledgment}
This work was supported by the EPSRC Science and Innovation award to the
Oxford Centre for Nonlinear PDE (EP/E035027/1). Ondrej Bud\'a\v c also gratefully
acknowledges support through the SPP Foundation and by the Slovak Research and Development
Agency under the contract No. APVV-0414-07.

\end{document}